\newtheorem{theorem}{Theorem}[section]
\newtheorem{lemma}[theorem]{Lemma}
\newtheorem{proposition}[theorem]{Proposition}
\newtheorem{corollary}[theorem]{Corollary}
\theoremstyle{definition}
\newtheorem{definition}[theorem]{Definition}
\newtheorem{example}[theorem]{Example}
\theoremstyle{remark}
\newtheorem{remark}[theorem]{Remark}
\numberwithin{equation}{section}
\begin{document}
\setcounter{page}{1}

%%%%%%%%%%%%%%%%%%%%%%%%%%%%%%%%%%%%%%%%%%%%%%%
%% Please do not remove the following statement.
%%%%%%%%%%%%%%%%%%%%%%%%%%%%%%%%%%%%%%%%%%%%%%%
\noindent \textbf{{\footnotesize Journal of Algebraic Systems\\ Vol. XX, No XX, (201X), pp XX-XX}}\\[1.00in]
%%%%%%%%%%%%%%%%%%%%%%%%%%%%%%%%%%%%%%%%%%%%%%%

%%%%%%%%%%%%%%%%%%%%%%%%%%%%%%%%%%%%%%%%%%%%%%%%%%%%%%%%%%%%%%%%%%%%%
% Insert title of your article. Note: \title[short title]{full title}
%%%%%%%%%%%%%%%%%%%%%%%%%%%%%%%%%%%%%%%%%%%%%%%%%%%%%%%%%%%%%%%%%%%%%
\title[On Relative Central Extensions and Covering Pairs]{On Relative Central Extensions and Covering Pairs}
%%%%%%%%%%%%%%%%%%%%%%%%%%%%%%%%%%%%%%
% Author's name must be inserted here
%%%%%%%%%%%%%%%%%%%%%%%%%%%%%%%%%%%%%%
\author[Pourmirzaei, Hassanzadeh and Mashayekhy]{A. Pourmirzaei$^{*}$, M. Hassanzadeh and B. Mashayekhy}
%%%%%%%%%%%%%%%%%%%%%%%%
\thanks{{\scriptsize
\hskip -0.4 true cm MSC(2010): Primary: 20E34; Secondary: 20E22, 20F05
\newline Keywords: Pair of groups, Covering pair, Relative central extension, Isoclinism of pairs of groups.\\
Received: 2 March 2015, Accepted: 26 February 2016.\\
$*$Corresponding author }}

%%%%%%%%%%%%%%%%%%%%%%%%%%%%%%%%%%%%%%%%%%%
\begin{abstract}
Let $(G,N)$ be a pair of groups. In this article, first we construct a relative central extension for the pair $(G,N)$ such that special types of covering pair of $(G,N)$ are homomorphic image of it. Second, we show that every perfect pair admits at least one covering pair. Finally, among extending some properties of perfect groups to perfect pairs, we characterize covering pairs of a perfect pair $(G,N)$ under some extra assumptions.
\end{abstract}

%%%%%%%%%%%%%%%%%%%%%%%
\maketitle
%%%%%%%%%%%%%%%%%%%%%%%

In this paper the well-known notion of relative central extension of a pair of groups is used. By a pair of groups we mean a group $G$ and a normal subgroup $N$ and this is denoted by $(G,N)$. Let $M$ be another group on which an action of $G$ is given. The $G$-commutator subgroup of $M$ is defined by the subgroup $[M,G]$ of $M$ generated by all the $G$-commutators $$[m,g]=m^gm^{-1},$$ in which $g\in G$, $m\in M$ and $m^g$ is the action of $g$ on $m$. Ellis \cite{El} defined the $G$-center of $M$ to be the subgroup $$Z(M,G)=\{m\in M|m^g=m, \forall g \in G \}$$
Now we recall the definition of relative central extension of a pair of groups.
%-------------------------------------------------------------------%
\begin{definition}
(\cite{Ell})
Let $(G,N)$ be a pair of groups. A relative
central extension of the pair $(G,N)$ consists of a group
homomorphism $\sigma : M \rightarrow G$, together with an action
of $G$ on
$M$ such that \\
$(i)  \sigma(M)=N$; \\
$(ii) \sigma(m^g)=g^{-1}\sigma(m)g$, for all $g \in G$, $m \in M$;\\
$(iii) m'^{\sigma(m)}= m^{-1}m'm$, for all $m, m' \in M$;\\
$(iv) Ker(\sigma) \subseteq Z(M,G)$,\\
\end{definition}
%-----------------------------------------------------------------------------------------%
Note that for every relative central extension $\sigma:M\rightarrow G$ of a pair of groups $(G,N)$, we have $Inn(M)\subseteq Im \sigma$, by the property $(iii)$ in Definition 0.1. Moreover, every relative central extension $\sigma:M\rightarrow G$ yields an exact sequence $$1\rightarrow ker\sigma \rightarrow M \stackrel{\sigma}{\rightarrow}N\rightarrow 1.$$ Conversely, for every exact sequence $$1\rightarrow ker\sigma \rightarrow M' \stackrel{\sigma}{\rightarrow}N\rightarrow 1$$ with properties $(i)$-$(iv)$, $\rho:M'\rightarrow G$ is a relative central extension. For this reason we do not differ between a relative central extension and its associated exact sequence.
%------------------------------------------------------------------------------------------%

Let $\sigma : M \rightarrow G$, $\sigma' : M' \rightarrow G$ be two relative central extensions of $(G,N)$. In 1998 Eliss \cite{Ell} introduced a morphism between these relative central extensions which is a group homomorphism $\varphi:M\rightarrow M'$ satisfying $\sigma'\varphi(m)=\sigma(m)$ and $\varphi(m^{g})=(\varphi(m))^{g}$ for all $g\in G$, $m\in M$. In particular, if $\varphi$ is a surjective homomorphism, then $\sigma'$ is called a homomorphic image of $\sigma$.
The resulting category is the category of relative central extensions which Eliss denoted it by $\mathcal{R}\mathcal{C}\mathcal{E}$ $(G,N)$.
%-------------------------------------------------------------------------------------------%

Now we recall the definition of a covering pair.
\begin{definition}
(\cite{Ell})
A relative central extension $\sigma: M^{*}\rightarrow G$ of the pair $(G,N)$ is called a covering pair for $(G,N)$ if there exists a subgroup $A$ of $M^{*}$ such that\\
$(i)  A\subseteq Z(M^{*},G)\cap [M^{*},G]$; \\
$(ii) A\cong M(G,N)$;\\
$(iii) N\cong M^{*}/A$,\\
where $M(G,N)$ is the Schur multiplier of the pair $(G,N)$.
\end{definition}

A covering pair $\sigma:G^{*}\rightarrow G$ of the pair $(G,G)$ coincides with the usual notion of a covering group $G^{*}$ of the group $G$. In 1998 Ellis \cite{Ell} showed that every pair of finite groups admits at least one covering pair.
%---------------------------------------------------------------------------------------------%

In this paper we consider $P$ as a perfect group that satisfies in maximal condition which means $\phi(P)\neq P$ such that $\phi(P)$ is the Frattini subgroup of $P$. We mention that if $\sigma: P\rightarrow G$ is a covering pair of $(G,N)$, then we have $P'=[P,G]$ and therefore $$N=\sigma(P)=\sigma([P,G])=[\sigma(P),G]=[N,G].$$ Let $(G,N)$ be a pair of groups with free presentation $G\cong F/R$ and $N\cong S/R$ for a normal subgroup $S$ of $F$, where $F$ is the free group on the set $G$. In Section 1, we introduce a relative central extension $\delta: S/[R,F]\rightarrow G$ which has an important role throughout the paper. As a consequence, we show that for every covering pair $\sigma:P\rightarrow G$, $P$ is a homomorphic image of $S/[R,F]$. We remark that in 2007 Salemkar, Moghaddam and Chiti {\rm }\cite[Lemma 2.5]{Sa} claimed that there exists an epimorphism from $\delta$ to any relative central extension without any condition. We present a counterexample to show that their claim is not correct and hence one of their main results {\rm }\cite[Theorem 2.6]{Sa} is not valid.

In 1978 Loday \cite{L} extended the notion of perfect group to perfect pair in the sense that a pair of groups $(G,N)$ is perfect if $[G,N]=N$. Moreover he proved that $(G,N)$ is a perfect pair of groups if and only if $\mathcal{R}\mathcal{C}\mathcal{E}$ $(G,N)$ has a universal object. To prove this he used some cohomological methods. In Section 2, by restriction of $\delta$ to $[S,F]/[R,F]$ we obtain a universal object in $\mathcal{R}\mathcal{C}\mathcal{E}$ $(G,N)$ when $(G,N)$ is perfect. This is also a covering pair of $(G,N)$. It is worth to mention that we use only presentation methods instead of cohomological methods which seems easier.\\ In sequel we extend a result of Schur on covering groups to covering pairs.
%---------------------------------------------------------------------------------------%
\section{Some Results for Relative Central Extensions}
%----------------------------------------------------------------------------------------%
Let $(G,N)$ be a pair of groups with a free presentation $1\rightarrow R\rightarrow F \stackrel{\pi}{\rightarrow}G\rightarrow1$ such that $N\cong S/R$ for a normal subgroup $S$ of $F$.
Define the group homomorphism
\begin{eqnarray}
 %\nonumber to remove numbering (before each equation)
\delta : \frac{S}{[R,F]} & \rightarrow & G, \\
s[R,F]& \mapsto & \pi(s). \ \  \ \
\end{eqnarray}
It is straightforward to check that $\delta$ is a
relative central extension by the following action
\begin{eqnarray}
% \nonumber to remove numbering (before each equation)
\bar{\delta} : \frac{S}{[R,F]}\times\frac{F}{R} & \rightarrow & \frac{S}{[R,F]}, \\
(s[R,F],fR)& \mapsto & s^f[R,F]. \ \  \ \  \ \ \ \
\end{eqnarray}

In the following theorem we present a relation between the above relative central extension to anyone of the pair $(G,N)$, especially to any covering pair $\sigma:M\rightarrow G$ of $(G,N)$.
%----------------------------------------------------------------------------------------------
\begin{theorem}\label{t1}
Let $G\cong F/R$, where $F$ is the free group on the set $G$ and $N$ be a normal subgroup of $G$ with $N\cong S/R$ for a normal subgroup $S$ of $F$. If $\sigma: M\rightarrow G$ is a relative central extension of the pair $(G,N)$, then there exists a homomorphism $\beta: S/[R,F]\rightarrow M$ such that the following diagram is commutative:
$$
\begin{array}{ccccccccc}
1&\rightarrow&\frac{R}{[R,F]}&\rightarrow&\frac{S}{[R,F]}& \stackrel{\delta}{\rightarrow}&N&\rightarrow&1 \\
& & \beta| \downarrow  & & \beta \downarrow& & \parallel& & \\
1&\rightarrow&A&\rightarrow&M& \stackrel{\sigma}{\rightarrow}&N&\rightarrow&1 ,\\
\end{array}
$$
where $\delta$ is the relative central extension $(4)$. In particular, if M is a perfect group with $\phi(M)\neq M$, then the above homomorphism $\beta$ is an epimorphism.
\end{theorem}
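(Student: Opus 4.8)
The plan is to build $\beta$ by the standard lifting technique for free presentations, and then to analyse its image. Recall that $F$ is free on the set $G$. For each generator $g \in G$ of $F$ we have $\pi(g) \in G$, and since $\sigma \colon M \to G$ is a relative central extension with $\sigma(M) = N$, the element $\pi(g)$ need not lie in $N$; however, $\operatorname{Im}\sigma = N$ only gives surjectivity onto $N$, so I cannot lift arbitrary elements of $G$. The correct move is to define $\beta$ on $S/[R,F]$ directly: pick a free-group-theoretic set of generators of $S$ as a normal subgroup, i.e.\ $S$ is generated by $\{\, {}^{f}s_i : f \in F \,\}$ for some generating set $\{s_i\}$, but better yet one uses that $S/[R,F]$ is generated by the cosets $s_i[R,F]$ together with the $G$-action (4). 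For each such $s_i$, $\pi(s_i) \in N$, so choose $m_i \in M$ with $\sigma(m_i) = \pi(s_i)$, and attempt to set $\beta(s_i[R,F]) = m_i$, extended $G$-equivariantly via $\beta(s^f[R,F]) = \beta(s[R,F])^{fR}$. The hard part is showing this is well-defined as a group homomorphism; here one invokes the freeness of $F$ and property (iii) of Definition 0.1, which says that the conjugation action of $M$ on itself is induced through $\sigma$, so that relations in $S$ of the form ``$[r, f]$ with $r \in R$'' are killed because $\sigma(\beta(r[R,F])) \in \sigma(\ker?) $ — more precisely, any element of $R$ maps under $\pi$ to $1 \in G$, its chosen lift lies in $\ker\sigma \subseteq Z(M,G)$, hence $[R,F]$ maps to $[\ker\sigma, G] = 1$, so $\beta$ factors through $S/[R,F]$. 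Commutativity of the right-hand square, $\sigma\beta = \delta$, holds on generators by construction ($\sigma\beta(s_i[R,F]) = \sigma(m_i) = \pi(s_i) = \delta(s_i[R,F])$) and then everywhere by $G$-equivariance; and $\beta$ restricts to $R/[R,F] \to A = \ker\sigma$ because $\delta(R/[R,F]) = 1$.

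For the ``in particular'' claim, suppose $M$ is perfect with $\phi(M) \ne M$. Set $B = \beta(S/[R,F]) \le M$. From the commutative diagram, $\sigma(B) = \delta(S/[R,F]) = N = \sigma(M)$, so $M = B \cdot \ker\sigma$. Since $\ker\sigma \subseteq Z(M,G)$ and, as noted in the paper's preamble, $Inn(M) \subseteq \operatorname{Im}\sigma$ forces strong constraints, the key point is that $\ker\sigma$ is central in $M$ in the ordinary sense: indeed $\ker\sigma \subseteq Z(M,G)$ combined with property (iii) (where the $M$-conjugation is the $\sigma(M) = N$-action restricted) shows every element of $\ker\sigma$ is fixed by the $M$-action on itself by conjugation, i.e.\ $\ker\sigma \le Z(M)$. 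Therefore $M = B \cdot Z(M)$ with $B$ containing... — and now the Frattini argument applies: since $M$ is perfect, $M = M' = [M,M] = [B Z(M), B Z(M)] = [B,B] \subseteq B$, wait, that already gives $B = M$ directly, using only perfectness. Let me reorganise: from $M = B\,Z(M)$ and $M = [M,M]$, every commutator $[b z, b'z'] = [b,b']$ lies in $B$, so $M = [M,M] \subseteq B$, hence $B = M$ and $\beta$ is onto. (The hypothesis $\phi(M) \ne M$ — equivalently that $M$ satisfies the maximal condition so that $M'$ is a proper-containment-detecting subgroup, or that $Z(M) \le \phi(M)$ for such $M$ — is the safety net in case $Z(M)$ is not finitely generated; the clean argument is: $B\,Z(M) = M$ and $B \supseteq [M,M] = M$, so $Z(M) \subseteq B$ is automatic and $\beta$ is surjective.)

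The step I expect to be the main obstacle is the well-definedness of $\beta$: one must verify that every defining relator of $S$ (as a subgroup of the free group $F$, with its induced presentation) maps to $1$ in $M$ under the tentative assignment, and this is where the interplay of all four axioms in Definition 0.1 is essential — in particular axiom (iii), which identifies inner automorphisms of $M$ with the action through $\sigma$, is what guarantees that the $G$-action used to extend $\beta$ is compatible with the $M$-multiplication, and axiom (iv), $\ker\sigma \subseteq Z(M,G)$, is what kills $[R,F]$. Once well-definedness is secured, the commutativity of the diagram is a routine check on generators, and the surjectivity in the perfect case is the short Frattini-style argument above. I would present the construction of $\beta$ first, then verify the two squares, and finish with the perfect-case argument.
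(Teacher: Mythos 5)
Your closing argument for surjectivity is fine, and in fact cleaner than the paper's: from $\sigma\beta=\delta$ you get $M=\beta(S/[R,F])\cdot\ker\sigma$, axiom (iii) forces $\ker\sigma\le Z(M,G)\le Z(M)$, and perfectness then gives $M=[M,M]\subseteq\beta(S/[R,F])$ with no appeal to $\phi(M)\ne M$ or a Frattini argument (the paper instead uses $A\le Z(M)\cap M'\le\phi(M)$). The genuine gap is earlier: the existence of a homomorphism $\beta$ with $\sigma\beta=\delta$ --- the actual content of the theorem --- is never established, and the route you sketch does not close the hole you yourself flag. Prescribing images $m_i$ of normal generators $s_i$ of $S$ and then ``extending $G$-equivariantly'' via $\beta(s^f[R,F])=\beta(s[R,F])^{fR}$ is not a construction of a homomorphism: $S/[R,F]$ is not free on those cosets, an element has many expressions as a product of conjugates of the $s_i$, and checking that all relations are respected is exactly the well-definedness problem you defer to ``freeness of $F$ and axiom (iii)'' without an argument. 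Your reason that $[R,F]$ dies (``$[R,F]$ maps to $[\ker\sigma,G]=1$'') already presupposes the equivariance $\psi(r^f)=\psi(r)^{\pi(f)}$ that is under construction; without it, $\psi([r,f])=\psi(r)^{-1}\psi(r^f)$ is merely a product of two possibly different central elements and need not vanish. Moreover, building $G$-equivariance into $\beta$ demands more than the theorem asserts: the paper is explicit (the remark after Corollary 1.2, and the case analyses in the proofs of Theorems 2.2 and 2.3, where $\beta(\bar s^{\,g})=\beta(\bar s)^{g}$ only for $g\in N$) that the $\beta$ it produces need not preserve the action, and whether an action-preserving lift exists is precisely the delicate point behind its criticism of \cite{Sa}.

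The missing idea is to define the lift on all of $F$, not just on $S$. The paper takes the set map $G\to N$, $g\mapsto g$ if $g\in N$ and $g\mapsto 1$ otherwise; freeness of $F$ on the set $G$ turns this into a homomorphism $\theta\colon F\to N$, and projectivity of $F$ with respect to the surjection $\sigma\colon M\to N$ gives $\alpha\colon F\to M$ with $\sigma\alpha=\theta$. Because $\alpha$ is defined at every $x\in F$, a generator $[r,x]$ of $[R,F]$ maps to the honest commutator $[\alpha(r),\alpha(x)]$ in $M$, which is trivial once $\alpha(r)\in\ker\sigma\le Z(M,G)\le Z(M)$; hence $\alpha|_S$ factors through $S/[R,F]$, yielding $\beta$, after which the two squares and the perfect case are routine. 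In your version the elements $x\in F\setminus S$ have no image at all, so the commutators $[r,x]$ cannot be handled, and the construction step fails as written.
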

\begin{proof}
Consider the map $f:G\rightarrow N$ by the rule $f(g)=g$ if $g\in N$, otherwise $f(g)=1$. Now $f$ induces a homomorphism $\theta: F\rightarrow N$. Using the projective property of free groups, there is a homomorphism $\alpha:F\rightarrow M$ such that $\sigma\alpha=\theta$. By the restriction of $\theta$ and $\alpha$ to $S$ we have the following commutative diagram:
$$
\begin{array}{ccccc}
 & &S& &\\
 &\alpha| \swarrow &\downarrow \theta|& & \\
M&\stackrel{\sigma}\longrightarrow &N&\longrightarrow&1. \\
\end{array}
$$
For $x\in F$ and $r\in R$, $\alpha([r,x])=[\alpha(r), \alpha(x)]=1$ since $\alpha(r)\in ker\sigma \leq Z(M,G)\leq Z(M)$. Therefore $[R,F]\leq ker \alpha|$ which implies the existence of $$\beta: \frac{S}{[R,F]}\rightarrow M.$$ Clearly $\beta(R/[R,F])\leq ker \sigma,$ which gives the restriction of $\beta$ to $R/[R,F]$, $\beta|$. Now let $A=ker\sigma\leq Z(M,G)$ and $\phi(M)\neq M$. For every $m\in M$, there exists $\bar{s}\in S/[R,F]$ such that $\sigma(m)=\delta(\bar{s})=\sigma\beta(\bar{s})$. Hence $m=\beta(\bar{s})a$, for some $a\in ker\sigma$. Thus $$M=\langle\beta(\bar{s}), a \ \ | \ \ \bar{s}\in \frac{S}{[R,F]},a\in A\rangle.$$ Since $M=M'$ so $$A\leq Z(M,G)\cap M' \leq Z(M)\cap M' \leq \phi(M).$$ Hence $$M=\langle\beta(\bar{s})\ \ | \ \ \bar{s}\in \frac{S}{[R,F]}\rangle$$ which implies that $\beta$ is an epimorphism.
\end{proof}
\begin{corollary}
With the assumptions and notations of the previous theorem, for every covering pair $\sigma: P\rightarrow G$ of $(G,N)$, $P$ is a homomorphic image of $S/[R,F]$.
\end{corollary}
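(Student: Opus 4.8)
The plan is to read off the corollary as a direct specialization of Theorem~\ref{t1}. The only thing one has to keep in mind is the standing convention adopted just before Section~1: the symbol $P$ always denotes a perfect group satisfying the maximal condition, that is, with $\phi(P)\neq P$. Hence a covering pair written in the form $\sigma:P\rightarrow G$ is, in particular, a relative central extension of $(G,N)$ whose middle group is perfect with non-trivial quotient modulo its Frattini subgroup, which is exactly the situation covered by the last clause of Theorem~\ref{t1}. (Recall also that for such a covering pair $P'=[P,G]$, so $P=P'$ is indeed perfect, as observed in the paragraph preceding Section~1.)

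Next I would apply Theorem~\ref{t1} with $M=P$ and $A=\ker\sigma$. Since $\sigma:P\to G$ is a relative central extension of $(G,N)$, the theorem yields a homomorphism $\beta: S/[R,F]\to P$ together with the commutative diagram whose two rows are the defining exact sequences and whose right-hand vertical arrow is the identity on $N$. Because $P=P'$ and $\phi(P)\neq P$, the ``in particular'' part of Theorem~\ref{t1} applies verbatim: from $A=\ker\sigma\subseteq Z(P,G)\cap P'\subseteq Z(P)\cap P'\subseteq\phi(P)$, together with $\sigma\beta=\delta$ and $\mathrm{Im}\,\delta=N=\sigma(P)$, one obtains $P=\langle\beta(\bar s)\mid \bar s\in S/[R,F]\rangle$, i.e.\ $\beta$ is surjective.

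Finally, surjectivity of $\beta$ gives $P\cong (S/[R,F])/\ker\beta$, so $P$ is a homomorphic image of $S/[R,F]$, which is the assertion. There is no real obstacle beyond making the convention on $P$ explicit; the point worth stressing is that the hypothesis that the middle group of the covering pair is perfect with $\phi\neq$ itself is genuinely needed — for instance, taking $(G,N)=(G,G)$ with $G$ finite cyclic, a covering group of $G$ is $G$ itself and need not be a quotient of the relevant $S/[R,F]$ — and it is precisely this hypothesis, built into the meaning of the letter $P$, that licenses the use of the epimorphism half of Theorem~\ref{t1}.
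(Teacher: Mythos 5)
Your proof is correct and takes the same route the paper intends: the corollary is just the ``in particular'' clause of Theorem~\ref{t1} applied with $M=P$, using the standing convention that the letter $P$ denotes a perfect group with $\phi(P)\neq P$, so the resulting $\beta:S/[R,F]\rightarrow P$ is an epimorphism and $P\cong (S/[R,F])/\ker\beta$. One small caveat on your parenthetical aside: the cyclic-group example does not show what you claim, since $G\cong F/R$ is always a quotient of $S/[R,F]=F/[R,F]$ when $N=G$; the point that perfectness is genuinely needed for the surjectivity of $\beta$ is instead illustrated by the paper's Example~1.4, and in any case your main argument does not depend on this remark.
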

%-------------------------------------------------------------------------------------------------------------------

Note that in Theorem \ref{t1} if $$\beta(\bar{s}^{g})=\beta(\bar{s})^{g}$$ for every $\bar{s}\in S/[R,F]$ and $g\in G$ (in other words $\beta$ preserves the action), then we can say that every covering pair $\sigma: P\rightarrow G$ is a homomorphic image of $\delta: S/[R,F]\rightarrow G$.
%--------------------------------------------------------------------------------------------------------------------
\begin{remark} In 2007 Salemkar, Moghaddam and Chiti {\rm }\cite[Lemma 2.5]{Sa} claimed that in the above theorem $\beta$ is always an epimorphism for any free presentation $G\cong F/R$ and $N\cong S/R$ without any condition. The following counterexample shows that their claim is not true and hence one of the main results of their paper {\rm }\cite[Theorem 2.6]{Sa} is not valid.
\end{remark}
%---------------------------------------------------------------------------------------------------------------
\begin{example} Put $G=N=\textbf{Z}\oplus ...\oplus\textbf{Z}$ a free abelian group of finite rank with the free presentation $G\cong F/F'\cong N$, where $F'$ is the commutator subgroup of the free group $F$. Consider the following commutative diagram:
$$
\begin{array}{ccccccccc}
1&\longrightarrow&\frac{F'}{[F',F]}& \stackrel{\subseteq}{\longrightarrow}&\frac{F}{[F',F]}& \stackrel{\delta}{\longrightarrow}&G=N=\textbf{Z}\oplus ...\oplus\textbf{Z}&\longrightarrow&1 \\
& & \beta| \downarrow  & & \beta \downarrow& & \parallel& & \\
1&\longrightarrow&A=\textbf{Z}&\stackrel{\alpha}{\longrightarrow}&M=G\oplus \textbf{Z}& \stackrel{\sigma}{\longrightarrow}&G=N=\textbf{Z}\oplus ...\oplus\textbf{Z}&\longrightarrow&1 ,\\
\end{array}
$$
where $\alpha(x)=(1,x)$ and $\sigma(g,x)=g$, for all $x\in A$, $g\in G$. Then we have $$\beta(\frac{F'}{[F',F]})=[\beta(\frac{F}{[F',F]}),\beta(\frac{F}{[F',F]})]\leq M'=1.$$ Hence $\beta$ is not an epimorphism.
\end{example}
%----------------------------------------------------------------------------------------------------------------
At the end of this section by a result of Ellis {\rm }\cite[Corollary 1.2]{Ell} on the Schur multiplier of a pair $(G,N)$ of finite nilpotent groups we present a covering pair of the pair $(G,N)$.
%-----------------------------------------------------------------------------------------%
\begin{theorem}
Let $(G,N)$ be a pair of finite nilpotent groups. Assume that $G=\prod^{k}_{i=1}S_{i}$, where $S_{i}$ is the Sylow $p_{i}$-subgroup of $G$ and $\sigma_{i}:M_{i}\rightarrow S_{i}$ for each  $i=1,...,k$, is an arbitrary covering pair of the pair $(S_{i},S_{i}\cap N)$. Then $\sigma:\prod^{k}_{i=1} M_{i}\rightarrow G$ defined by $\sigma(\{m_{i}\}_{i=1}^{k})=\{\sigma_{i}(m_{i})\}_{i=1}^{k}$ is a covering pair of $(G,N)$.
\end{theorem}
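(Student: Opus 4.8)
The plan is to verify first that $\sigma$ is a relative central extension of $(G,N)$ in the sense of Definition 0.1, and then to produce the subgroup required for a covering pair as the direct product of the subgroups attached to the individual covering pairs $\sigma_{i}$. Throughout I will use that $G\cong\prod_{j=1}^{k}S_{j}$ and that a finite nilpotent group is the direct product of its Sylow subgroups.

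To equip $\prod_{i=1}^{k}M_{i}$ with a $G$-action I would let $G=\prod_{j=1}^{k}S_{j}$ act componentwise, with $S_{j}$ acting trivially on $M_{i}$ for $j\neq i$ and $S_{i}$ acting on $M_{i}$ by the given action; concretely $(g_{j})_{j}\in G$ sends $(m_{i})_{i}$ to $(m_{i}^{g_{i}})_{i}$. With this action, conditions $(ii)$ and $(iii)$ of Definition 0.1 for $\sigma$ reduce coordinatewise to the corresponding conditions for each $\sigma_{i}$, and condition $(iv)$ holds because $\ker\sigma=\prod_{i}\ker\sigma_{i}$ while $\ker\sigma_{i}\subseteq Z(M_{i},S_{i})$ forces every element of $\ker\sigma$ to be fixed by all of $G$. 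For condition $(i)$ I would use that $N$, being a normal subgroup of the finite nilpotent group $G$, is itself nilpotent, hence $N=\prod_{i=1}^{k}(S_{i}\cap N)$ with $S_{i}\cap N$ the Sylow $p_{i}$-subgroup of $N$; since $\sigma_{i}(M_{i})=S_{i}\cap N$ this gives $\sigma\bigl(\prod_{i}M_{i}\bigr)=\prod_{i}(S_{i}\cap N)=N$.

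Next I would record the two identities
\[
Z\Bigl(\prod_{i}M_{i},G\Bigr)=\prod_{i}Z(M_{i},S_{i}),
\qquad
\Bigl[\prod_{i}M_{i},G\Bigr]=\prod_{i}[M_{i},S_{i}].
\]
The first is immediate from the componentwise description of the action. For the second, any $G$-commutator $[m,g]=m^{g}m^{-1}$ has $i$-th coordinate $[m_{i},g_{i}]\in[M_{i},S_{i}]$, so the left-hand side is contained in the right; conversely, taking $g$ and $m$ supported in a single coordinate shows that each factor $[M_{i},S_{i}]$, embedded in the $i$-th slot, already lies in $[\prod_{i}M_{i},G]$, which gives the reverse inclusion.

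Finally, writing $A_{i}\subseteq M_{i}$ for the subgroup witnessing that $\sigma_{i}$ is a covering pair of $(S_{i},S_{i}\cap N)$, I would set $A=\prod_{i=1}^{k}A_{i}$ and check the three defining conditions of a covering pair. The condition $A\subseteq Z(\prod_{i}M_{i},G)\cap[\prod_{i}M_{i},G]$ follows from $A_{i}\subseteq Z(M_{i},S_{i})\cap[M_{i},S_{i}]$ together with the two identities above; the condition $N\cong(\prod_{i}M_{i})/A$ follows from $M_{i}/A_{i}\cong S_{i}\cap N$ and the decomposition of $N$; and the condition $A\cong M(G,N)$ follows from $A\cong\prod_{i}M(S_{i},S_{i}\cap N)$ once we invoke Ellis's Corollary 1.2, which identifies the Schur multiplier of a pair of finite nilpotent groups with $\prod_{i=1}^{k}M(S_{i},S_{i}\cap N)$. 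The substantive input beyond routine direct-product bookkeeping is exactly this Schur-multiplier decomposition; the only other point needing a little care is the computation of the $G$-commutator subgroup of $\prod_{i}M_{i}$.
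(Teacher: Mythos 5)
Your proposal is correct and follows essentially the same route as the paper: the paper simply states that $\sigma$ is "readily seen" to be a relative central extension and that the rest is straightforward once one invokes Ellis's Corollary 1.2 giving $M(G,N)\cong\prod_{i=1}^{k}M(S_{i},S_{i}\cap N)$, which is exactly the decomposition you use with $A=\prod_{i}A_{i}$. Your write-up merely supplies the componentwise verifications (the $G$-action, the identities for $Z(\prod_{i}M_{i},G)$ and $[\prod_{i}M_{i},G]$, and $N=\prod_{i}(S_{i}\cap N)$) that the paper leaves implicit.
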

%--------------------------------------------------------------------------------------------%
\begin{proof} It is readily seen that $\sigma$ is a relative central extension of $(G,N)$. By {\rm }\cite[Corollary 1.2]{Ell} $M(G,N)\cong \Pi_{i=1}^{k}M(S_{i},S_{i}\cap N)$. hence the proof is straightforward.
\end{proof}
%---------------------------------------------------------------------------------------%
\section{Some Properties of Covering Pairs}
The aim of this section is to introduce a covering pair for a perfect pair of groups $(G,N)$.
\begin{theorem}
Let $(G,N)$ be a perfect pair of groups. Then for every covering pair $\sigma:M\rightarrow G$, we have $[M,G]=M$.
\end{theorem}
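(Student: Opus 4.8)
The plan is to exploit the relative central extension $\delta: S/[R,F] \to G$ constructed in Section~1 together with Theorem~\ref{t1}, and to pass the perfectness hypothesis $[G,N]=N$ through the comparison map $\beta$. Fix a free presentation $G\cong F/R$ with $N\cong S/R$, and let $\sigma:M\to G$ be any covering pair of $(G,N)$. By Theorem~\ref{t1} there is a homomorphism $\beta:S/[R,F]\to M$ with $\sigma\beta=\delta$ and $\beta(R/[R,F])\subseteq A:=\ker\sigma$. The first step is to identify $[M,G]$ with $\beta$ of the obvious candidate subgroup of $S/[R,F]$, namely the image of $[S,F]$. Concretely, since the $G$-action on $S/[R,F]$ is induced by conjugation in $F$ and $\beta$ intertwines enough of the structure, one shows $\beta\big([S,F]/[R,F]\big) \subseteq [M,G]$; conversely, using that $\mathrm{Inn}(M)\subseteq \mathrm{Im}\,\sigma$ (property (iii)) and that $\sigma$ is onto $N$, every $G$-commutator $[m,g]=m^g m^{-1}$ of $M$ lies in the subgroup generated by $\beta$ of $G$-commutators of $S/[R,F]$, so $[M,G]=\langle \beta([S,F]/[R,F]),\, [M,G]\cap A\rangle$.

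The second step is the arithmetic of the Schur multiplier of the pair. Recall $M(G,N)\cong (R\cap[S,F])/[R,F]$ in this presentation, and that for a covering pair the distinguished subgroup $A\cong M(G,N)$ satisfies $A\subseteq Z(M,G)\cap[M,G]$ and $N\cong M^{*}/A$. The key computation is that $S/[R,F]$ is generated by $[S,F]/[R,F]$ together with $R/[R,F]$ precisely when $(G,N)$ is perfect: indeed $[G,N]=N$ means $S = [S,F]R$ in $F/R$, i.e. $S=[S,F]R$ modulo $R$, hence $S = [S,F]R$ and so $S/[R,F] = \big([S,F]/[R,F]\big)\cdot\big(R/[R,F]\big)$. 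Applying $\beta$ and using $\beta(R/[R,F])\subseteq A$ gives $M = \beta([S,F]/[R,F])\cdot A$. Combining with Step~1, $M = [M,G]\cdot A$, and since $A\subseteq [M,G]$ for a covering pair, we conclude $M=[M,G]$.

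I expect the main obstacle to be the careful bookkeeping in Step~1: $\beta$ is only asserted to be a group homomorphism in Theorem~\ref{t1}, not to preserve the $G$-action (this is exactly the subtlety flagged in the Remark and Example about the Salemkar--Moghaddam--Chiti claim), so one cannot naively write $\beta(\bar s^{\,g}) = \beta(\bar s)^g$. The way around this is to avoid needing action-equivariance of $\beta$ altogether: use property~(iii) of Definition~0.1 to rewrite conjugation in $M$ via the $\sigma$-action, pull a preimage $\bar s$ of each relevant element back along $\delta$, and track the resulting elements of $\ker\sigma$, which are central and therefore harmless for the commutator computation. Once it is observed that the discrepancy between $\beta(\bar s^{\,g})$ and $\beta(\bar s)^g$ lies in $A\subseteq Z(M,G)$, all commutator identities go through and the argument closes. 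The perfectness hypothesis enters only through the clean equality $S=[S,F]R$, which is the easy part.
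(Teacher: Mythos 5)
Your argument is correct (granting Theorem~\ref{t1} as stated, and the identification of the distinguished subgroup $A$ of the covering pair with $\ker\sigma$ so that $\ker\sigma\subseteq[M,G]$ --- an identification the paper's own proof also makes), but it takes a genuinely different and much heavier route. The paper proves the statement in three lines without any free presentation: by axiom (ii), $\sigma([m,g])=[\sigma(m),g]$, so $\sigma$ carries $[M,G]$ onto $[N,G]$; since $\ker\sigma\le[M,G]$ and $M/\ker\sigma\cong N$, one gets $M/[M,G]\cong N/[N,G]$, which is trivial because $(G,N)$ is perfect, hence $M=[M,G]$. Your route re-derives this upstairs: perfectness as $S=[S,F]R$, then $M=\beta(S/[R,F])\ker\sigma=\beta([S,F]/[R,F])\cdot A$, together with $\beta([S,F]/[R,F])\subseteq[M,G]$ and $A\subseteq[M,G]$. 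That does close, and your key observation --- that $\beta(\bar s^{\,g})$ and $\beta(\bar s)^{g}$ differ by an element of $\ker\sigma$ --- is exactly right, but only because of the commutativity $\sigma\beta=\delta$ from Theorem~\ref{t1}: both elements have $\sigma$-image $\delta(\bar s^{\,g})=g^{-1}\delta(\bar s)g=\sigma(\beta(\bar s)^{g})$; spell that line out rather than leaving it as an expectation. Note that with the explicit $\beta$ constructed in the paper (via $\theta$) one has instead $\beta(\bar s^{\,g})=\beta(\bar s)$ for $g\notin N$, so the discrepancy is then the genuine commutator $[\beta(\bar s),g]$ and not an element of $A$; either way each generator $[\bar s,g]$ of $[S,F]/[R,F]$ is sent into $[M,G]$, which is all you need. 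The ``converse'' half of your Step~1, asserting $[M,G]=\langle\beta([S,F]/[R,F]),\,[M,G]\cap A\rangle$, is never used and can be dropped. What your approach buys is continuity with the later sections, where the same computation of $\beta$ on $G$-commutators reappears (Theorems 2.2--2.4); what the paper's approach buys is complete independence from Theorem~\ref{t1} and from all equivariance bookkeeping, since perfectness is consumed directly through the quotient $N/[N,G]$ rather than through $S=[S,F]R$.
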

%--------------------------------------------------------------------------------------------%
\begin{proof} Since $\sigma:M\rightarrow G$ is a covering pair, we have $M/ker\sigma \cong N$ and $ker \sigma \leq [M,G]$. Hence $$\frac{M/ker\sigma}{[M,G]/ker\sigma}\cong\frac{N}{[N,G]}.$$ The result is a consequence of $(G,N)$ being perfect.
\end{proof}
%---------------------------------------------------------------------------------------------%

 It is known that if $G\cong F/R$ is a group with a covering group $G^*$, then there exists a normal subgroup $S$ of $F$ such that $R/[R,F]=(F'\cap R)/[R,F]\times S/[R,F]$, and we have an isomorphism $F/S\cong G^*$ {\rm }\cite[Theorem 2.4.6(iv)(b)]{Ka}. In the following theorem we intend to extend this result to pairs of groups.
\begin{theorem}
Let $(G,N)$ be a pair of groups with $G\cong F/R$ and $N\cong S/R$, where $F$ is the free group on the set $G$ and $S\unlhd F$. Then for every relative central extension $\sigma: P\rightarrow G$ with $A\cong ker\sigma$ there is a normal subgroup $T$ of $F$ such that\\
$$\frac{R}{[R,F]}=\frac{R\cap[S,F]}{[R,F]}\times \frac{T}{[R,F]}.$$\\
Moreover, there exists an isomorphism $S/T\cong P$ which carries $R/T$ onto $A$.
\end{theorem}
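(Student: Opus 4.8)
The plan is to carry the classical argument for covering groups (\cite[Theorem 2.4.6]{Ka}) over to the relative setting, with Theorem \ref{t1} playing the role of the usual lifting through a free presentation. Write $A\cong\ker\sigma$. Since $\sigma\colon P\to G$ is a covering pair with $P$ perfect we have $A\subseteq Z(P,G)\cap[P,G]$, $A\cong M(G,N)$, $N\cong P/A$, and, as recorded in the introduction, $N=[N,G]$, so $(G,N)$ is a perfect pair and $S=R\,[S,F]$. Throughout I use one elementary remark: $F$ acts trivially on $R/[R,F]$ by conjugation, so $R/[R,F]\le Z(F/[R,F])$; consequently every subgroup lying between $[R,F]$ and $R$ is normal in $F$. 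I also use that $(R\cap[S,F])/[R,F]$ is the kernel of the restriction $\delta|\colon[S,F]/[R,F]\to G$ of $\delta$, and that this group is isomorphic to $M(G,N)$ (the presentational description of the relative multiplier, \cite{Ell}), hence to $A$.

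First I construct $T$. By Theorem \ref{t1} there is a homomorphism $\beta\colon S/[R,F]\to P$ with $\sigma\beta=\delta$, and since $P$ is perfect with $\phi(P)\neq P$ this $\beta$ is onto; from $\sigma\beta=\delta$ we get $\ker\beta\subseteq\ker\delta=R/[R,F]$. Let $T$ be the preimage in $R$ of $\ker\beta$. Then $[R,F]\le T\le R$, hence $T\unlhd F$ by the remark above, and passing to quotients $\beta$ induces an isomorphism $S/T\cong P$ which is compatible with the maps to $G$ (because $\sigma\beta=\delta$); in particular it carries $R/T=\ker(\bar\delta\colon S/T\to G)$ onto $\ker\sigma=A$, and $R/T\cong A$.

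It remains to prove the decomposition
\[
\frac{R}{[R,F]}=\frac{R\cap[S,F]}{[R,F]}\times\frac{T}{[R,F]}.
\]
Since $R/[R,F]$ is abelian, it is enough to check that $\beta$ restricts to an \emph{isomorphism} of $(R\cap[S,F])/[R,F]$ onto $A$: for then $T/[R,F]=\ker\beta$ meets $(R\cap[S,F])/[R,F]$ trivially, while given $x\in R/[R,F]$ one has $\beta(x)\in A$, so $\beta(x)=\beta(y)$ for some $y\in(R\cap[S,F])/[R,F]$, whence $x=(xy^{-1})y\in (T/[R,F])\cdot\bigl((R\cap[S,F])/[R,F]\bigr)$, and the two subgroups generate $R/[R,F]$. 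For surjectivity: $\sigma$ maps $\beta\bigl([S,F]/[R,F]\bigr)$ onto $\delta\bigl([S,F]/[R,F]\bigr)=[N,G]=N$, so $\beta\bigl([S,F]/[R,F]\bigr)\cdot A=P$; since $A$ is central in the perfect group $P$ this forces $\beta\bigl([S,F]/[R,F]\bigr)=P$, and then each $a\in A=\ker\sigma$ equals $\beta(u)$ for some $u\in[S,F]/[R,F]$ with $\delta(u)=1$, i.e.\ $u\in(R\cap[S,F])/[R,F]$. For injectivity one invokes that $\sigma$ is a covering pair: the surjection $(R\cap[S,F])/[R,F]\to A$ is then a surjection between two groups isomorphic to $M(G,N)$, hence an isomorphism (by a comparison of orders in the finite case, or, in general, because a covering pair of a perfect pair coincides with the universal object $[S,F]/[R,F]$ of $\mathcal{R}\mathcal{C}\mathcal{E}$ $(G,N)$).

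The step I expect to be the real obstacle is precisely this injectivity, i.e.\ that the multiplier term $(R\cap[S,F])/[R,F]$ splits off as a direct factor of $R/[R,F]$: the covering-pair conditions $A\cong M(G,N)$ and $A\subseteq[P,G]$ are exactly what make this succeed, and the conclusion genuinely fails for an arbitrary relative central extension — for instance for the identity extension of the perfect pair $(A_5,A_5)$, where $A=1$ while $M(A_5)\neq 1$. Everything else is a routine manipulation of the presentation together with Theorem \ref{t1}, and, in the spirit of the paper, no cohomology is used.
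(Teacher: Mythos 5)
Your proof is correct and follows essentially the same route as the paper's own argument: Theorem 1.1 supplies the epimorphism $\beta\colon S/[R,F]\to P$, the subgroup $T$ is (the preimage of) $\ker\beta$ giving $S/T\cong P$ with $R/T\mapsto A$, and the crux in both proofs is that $\beta$ restricts to an isomorphism $(R\cap[S,F])/[R,F]\to A$ (surjectivity from perfectness and centrality of $A$, injectivity by comparing with $A\cong M(G,N)\cong (R\cap[S,F])/[R,F]$ in the finite case), after which the internal direct decomposition of the abelian group $R/[R,F]$ is immediate. Your extra remarks — that the literal hypothesis ``every relative central extension'' must in fact be the covering-pair/perfect/$\phi(P)\neq P$/finiteness assumptions (your identity extension of $(A_5,A_5)$ shows the statement fails otherwise), and that $T\unlhd F$ because $[R,F]\le T\le R$ — are precisely the points the paper's proof uses tacitly or leaves unaddressed.
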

%---------------------------------------------------------------------------------------------%
\begin{proof} Let $\pi: F\rightarrow G $ be a surjective homomorphism with $R=ker \pi$. As the proof of Theorem \ref{t1}, there exists a homomorphism $\psi:S\rightarrow P$ which induces the epimorphism $\beta:S/[R,F]\rightarrow P$. Thus $\psi$ is in fact an epimorphism. By setting $T=ker \psi$,  we have $S/T \cong P$. Clearly $\psi(R)=A$ and by Theorem \ref{t1}, $\psi([R,F])=1$ so $[R,F]\leq T$ which gives the induced epimorphism $\bar{\psi}:R/[R,F]\rightarrow A$. Considering $$\bar{\psi}|:\frac{R\cap[S,F]}{[R,F]}\rightarrow A,$$ we claim that $\bar{\psi}|$ is an isomorphism. To prove this, let $a\in A$. By $\psi(R)=A$ there exists $x\in R$ such that $a=\psi(x)$. On the other hand for every $p\in P$ and $g\in G$,
$$[p,g]=[\beta(\bar{s}),g]=\beta(\bar{s}^{-1}) \beta(\bar{s}^{g}).$$
Easily it can be seen that

\[\beta ({\bar s^g}) = \left\{ {\begin{array}{*{20}{c}}
   {\beta {{(\bar s)}^g}} & {ifg \in N}  \\
   {\beta (\bar s)\beta (\bar r)} & {ifg \notin N}  \\

 \end{array} } \right.\]
for some $\bar{r}\in R/[R,F]$.\\
Thus by the proof of Theorem \ref{t1},$$P=\langle \beta(\bar{s})\   \ |\    \ \bar{s}\in \frac{S}{[R,F]} \setminus \frac{R}{[R,F]}\rangle.$$ Since $P$ is perfect and so $(G,N)$ is perfect, we have $[S,F]R=S$. On the other hand $A=\psi(R)\leq \phi(P)$. Therefore $[P,G]\leq \psi([S,F])$. This fact and $A\leq [M,G]$ imply that $a=\psi(y)$ for some $y\in [S,F]$. Thus $\psi(x)=a=\psi(y)$, hence $x^{-1}y\in ker\psi\leq R$, so that $y\in R\cap[S,F]$. Since $A\cong M(G,N)\cong (R\cap [S,F])/[R,F]$ and $A$ is finite, $\bar{\psi}|$ is an isomorphism. By surjectivity of  $\bar{\psi}$, for every $\bar{r}\in R/[R,F]$ there exists $\bar{x}\in (R\cap [S,F])/[R,F]$ such that $\bar{\psi}(\bar{x})$=$\bar{\psi}(\bar{r})$, thus $\bar{x}^{-1}\bar{r}\in ker \psi$. Consequently $$\frac{R}{[R,F]}=ker \bar{\psi}\  \frac{R\cap[S,F]}{[R,F]}.$$ Let $\bar{x}\in ker \bar{\psi}\cap (R\cap [S,F])/[R,F]$. Then $\bar{x}\in ker \bar{\psi}|=1$, so we have the following isomorphism  $$\frac{R}{[R,F]}\cong ker \bar{\psi}\times \frac{R\cap[S,F]}{[R,F]}= \frac{T}{[R,F]}\times \frac{R\cap[S,F]}{[R,F]}.$$
\end{proof}
%------------------------------------------------------------------------------------------%
\begin{theorem}
Let $\sigma_{i}: M_{i}\rightarrow G$, $i=1,2$ be two covering pairs of a finite pair $(G,N)$. If $M_{i}=M'_{i}$ and $\phi(M_{i})\neq M_{i}$ for $i=1,2$, then \\
$(i)M_{1}\cong M_{2}$;\\
$(ii)M_{1}/Z(M_{1},G)\cong M_{2}/Z(M_{2},G)$;\\
$(iii)Z(M_{1},G)/ker\sigma_{1}\cong Z(M_{2},G)/ker\sigma_{2}$.
\end{theorem}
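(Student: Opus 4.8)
The plan is to show that both covering pairs in the statement are isomorphic, \emph{as objects of} $\mathcal{R}\mathcal{C}\mathcal{E}(G,N)$, to the universal object of that category, and then to read off $(i)$--$(iii)$ from the resulting action-preserving isomorphism $M_1\to M_2$.

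First I would note that the hypothesis $M_i=M_i'$ forces $(G,N)$ to be a perfect pair. Indeed, for any relative central extension $\sigma:M\to G$, property $(iii)$ of Definition 0.1 says that conjugation by an element $m$ of $M$ coincides with the action of $\sigma(m)\in N\subseteq G$, so $M'\subseteq[M,G]$; together with $M=M'$ this gives $M=[M,G]$, whence $N=\sigma(M)=\sigma([M,G])=[N,G]$ by property $(ii)$. Since $(G,N)$ is now known to be perfect, by Loday's theorem \cite{L} the category $\mathcal{R}\mathcal{C}\mathcal{E}(G,N)$ has a universal (initial) object; as recalled in the introduction it may be realized as $\tau:\mathcal{U}:=[S,F]/[R,F]\to G$, the restriction of $\delta$, and this $\mathcal{U}$ is itself a covering pair with $\ker\tau=(R\cap[S,F])/[R,F]\cong M(G,N)$ (the relative Hopf-type identity already used in the proof of Theorem 2.2), a finite group because $(G,N)$ is finite.

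Next, for $i=1,2$ let $u_i:\mathcal{U}\to M_i$ be the unique morphism of $\mathcal{R}\mathcal{C}\mathcal{E}(G,N)$ supplied by universality, so that $\sigma_i u_i=\tau$ and $u_i(m^g)=u_i(m)^g$ for all $g\in G$. The crucial claim is that each $u_i$ is an isomorphism. For surjectivity, note that $\sigma_i(u_i(\mathcal{U}))=\tau(\mathcal{U})=N=\sigma_i(M_i)$ gives $u_i(\mathcal{U})\ker\sigma_i=M_i$; but $\ker\sigma_i\subseteq Z(M_i,G)\subseteq Z(M_i)$ (property $(iv)$), and a finite perfect group has its centre inside its Frattini subgroup (if $Z(M_i)\not\subseteq H$ for a maximal subgroup $H$, then $M_i=Z(M_i)H$, hence $M_i'=[H,H]\subseteq H\subsetneq M_i$, contradicting $M_i=M_i'$), so $\ker\sigma_i\subseteq\phi(M_i)$ and the non-generator property of $\phi(M_i)$ yields $u_i(\mathcal{U})=M_i$. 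For injectivity, $\ker u_i\subseteq\ker\tau$ since $\tau=\sigma_i u_i$, and the surjectivity just proved shows that $u_i$ maps $\ker\tau$ onto $\ker\sigma_i$; as $|\ker\tau|=|M(G,N)|=|\ker\sigma_i|$ is finite, this surjection is a bijection, so $\ker u_i=1$. I expect this step, and specifically the surjectivity half, to be the main obstacle: it is where $\ker\sigma_i$ must be pushed into $\phi(M_i)$ and the Frattini argument invoked, whereas injectivity is then forced by finiteness together with the covering-pair condition $\ker\sigma_i\cong M(G,N)$.

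Finally, $\lambda:=u_2\circ u_1^{-1}:M_1\to M_2$ is a group isomorphism with $\sigma_2\lambda=\sigma_1$ and $\lambda(m^g)=\lambda(m)^g$. Action-preservation gives $\lambda(Z(M_1,G))=Z(M_2,G)$, and $\sigma_2\lambda=\sigma_1$ gives $\lambda(\ker\sigma_1)=\ker\sigma_2$; hence $\lambda$ proves $(i)$, the isomorphism it induces on the quotients by the $G$-centres proves $(ii)$, and the further induced isomorphism $Z(M_1,G)/\ker\sigma_1\to Z(M_2,G)/\ker\sigma_2$ proves $(iii)$. (Part $(i)$ alone can alternatively be deduced from Theorem 2.2, by verifying that the subgroup $T$ it produces always satisfies $S/T\cong[S,F]/[R,F]$, independently of the chosen covering pair.)
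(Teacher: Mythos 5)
Your proposal is correct, but it follows a genuinely different route from the paper. You funnel everything through the universal object: invoking Loday's theorem (equivalently the paper's Theorem 2.4, whose proof is independent of the present statement, so there is no circularity, only a forward reference) you obtain action-preserving morphisms $u_i:[S,F]/[R,F]\rightarrow M_i$, prove each $u_i$ is onto by pushing $\ker\sigma_i\subseteq Z(M_i,G)\subseteq Z(M_i)\subseteq\phi(M_i)$ (your observation that $M_i=M_i'$ already forces $[M_i,G]=M_i$ and hence $(G,N)$ perfect is correct and is left implicit in the paper), and injective by comparing the finite kernels $\ker\tau\cong M(G,N)\cong\ker\sigma_i$; the single compatible isomorphism $\lambda=u_2u_1^{-1}$ then yields $(i)$, $(ii)$ and $(iii)$ simultaneously, since it carries $Z(M_1,G)$ to $Z(M_2,G)$ and $\ker\sigma_1$ to $\ker\sigma_2$. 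The paper instead proves $(i)$ by an order count: $|M^{*}|=|[M^{*},G]|=|M(G,N)|\,|[N,G]|=|[S,F]/[R,F]|$ together with the (not necessarily action-preserving) map $\beta$ of Theorem 1.1 and the case analysis $\beta([\bar s,g])$ for $g\in N$ versus $g\notin N$, concluding $M^{*}\cong[S,F]/[R,F]$; and it proves $(ii)$ and $(iii)$ via Theorem 2.2, realizing $M^{*}\cong S/T$ and identifying $M^{*}/Z(M^{*},G)\cong S/L$ and $Z(M^{*},G)/\ker\sigma\cong L/R$, groups determined by the presentation alone. What your approach buys is economy and a stronger conclusion (one isomorphism compatible with the $\sigma_i$ and the $G$-actions, rather than three separate presentation-theoretic identifications); what it costs is reliance on the universal property, i.e.\ precisely the action-preservation that the paper is at pains about (cf.\ the caveat after Corollary 1.2 and Example 1.4), which is why you must use the perfect-pair universal object rather than the bare $\beta$ of Theorem 1.1 --- and you do. Both arguments lean equally on the Hopf-type identity $M(G,N)\cong(R\cap[S,F])/[R,F]$ and on finiteness of $M(G,N)$, so that is not a point of difference.
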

%--------------------------------------------------------------------------------------------%
\begin{proof}
$(i)$ Let $\sigma: M^{*}\rightarrow G$ be a fixed arbitrary covering pair with the following relative central extension $$1\rightarrow A\rightarrow M^{*}\rightarrow N\rightarrow 1$$ such that $A\subseteq Z(M^{*},G)\cap [M^{*},G]$ and $A\cong M(G,N)$. Since $[N,G]=N$ and $[M^{*},G]=M^{*}$ so $\sigma: [M^{*},G]\rightarrow [N,G]$ is an epimorphism. Therefore
\[\begin{array}{*{20}{c}}
   {|[M^{*},G]|} &  =  & {|\ker \sigma |} & {|[N,G]|}  \\
   {} &  =  & {|A|} & {|[N,G]|}  \\
   {} &  =  & {|M(G,N)|} & {|[N,G]|} . \\
   {} & {} & {} & {}  \\

 \end{array} \]
Suppose that $G\cong F/R$ such that $F$ is the free group on the set $G$ with $N\cong S/R$. By Theorem \ref{t1}, we have the following diagram
$$
\begin{array}{ccccccccc}
1&\rightarrow&\frac{R}{[R,F]}&\rightarrow&\frac{S}{[R,F]}& \stackrel{\delta}{\rightarrow}&N&\rightarrow&1 \\
 & & \beta| \downarrow& & \beta \downarrow& & \parallel & & \\
1&\rightarrow&A&\rightarrow&M^{*}& \stackrel{\sigma}{\rightarrow}&N&\rightarrow&1. \\
\end{array}
$$
Since $[S/[R,F],G]=[S,F]/[R,F]$ and $\delta_{|}: [S,F]/[R,F]\rightarrow [N,G]$ is an epimorphism, thus $$|\frac{[S,F]}{[R,F]}|=|M(G,N)| \\ |[N,G]|.$$
Therefore  $$|M^{*}|=|[M^{*},G]|=|\frac{[S,F]}{[R,F]}|.$$ If $\bar{s}\in S/[R,F]$ and $g\in G$,

\[\beta ([\bar s,g]) = \beta {(\bar s)^{ - 1}}\beta ({\bar s^g}) = \left\{ {\begin{array}{*{20}{c}}
   {\beta {{(\bar s)}^{ - 1}}\beta {{(\bar s)}^g} = [\beta (\bar s),g]} & {ifg \in N}  \\
   {\beta {{(\bar s)}^{ - 1}}\beta (\bar s) = 1} & {ifg \notin N}  \\

 \end{array} } \right.\]
which yields that $\beta([S,F]/[R,F])\leq [M^{*},G].$ By Theorem 2.1, $M^{*}=[M^{*},G]\leq \beta([S,F]/[R,F])$ which implies that $M^{*}\cong [S,F]/[R,F]$.\\
$(ii)$ By Theorem 2.2 there exists a normal subgroup $T$ of $F$ with $M^{*}\cong S/T.$ This yields naturally an action of $G$ on $S/T$ which implies $Z(S/T,G)=Z(M^{*},G).$ Put $Z(S/T,G)=L/T.$ Let $x\in Z(S/[R,F],G)$. Then $[x,g]\in [R,F]\leq T$ and so $Z(S/[R,F],G)\leq L/[R,F]$. To prove the reverse containment, assume that $x[R,F]\in L/[R,F]$, then $[x,g]\in T\leq R\cap [S,F]\cap T=[R,F]$,  hence $$\frac{L}{[R,F]}=Z(\frac{S}{[R,F]},G).$$ Consequently, it may be inferred that $$\frac{M^{*}}{Z(M^{*},G)}\cong \frac{S/T}{L/T}\cong\frac{S}{L}.$$ The desired assertion is now a consequence of the fact that $S/L$ is determined by the presentation of $G$ and $N$.\\
$(iii)$ Owing to Theorem \ref{t1} we have $$\frac{Z(M^{*},G)}{A}\cong\frac{L/T}{R/T}\cong\frac{L}{R}.$$ The result is now established.
\end{proof}
%---------------------------------------------------------------------------------------%
Now, we are in a position to state and prove one of the main results of this section.
%----------------------------------------------------------------------------------------%
\begin{theorem}
Let $(G,N)$ be a perfect pair of groups with a free presentation $G\cong F/R$ and $N\cong S/R$. Then\\
$(i)\ \ \delta:[S,F]/[R,F]\rightarrow G$ by $\delta(x[R,F])=xR$, is a covering pair of $(G,N)$\\
$(ii)$ The relative central extension $\delta:[S,F]/[R,F]\rightarrow G$ is the universal object in the category $\mathcal{R}\mathcal{C}\mathcal{E}$ $(G,N)$.
\end{theorem}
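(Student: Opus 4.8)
The plan is to verify the covering‑pair axioms for $\delta$ in part $(i)$, and then in part $(ii)$ to show that $\delta$ is the initial object of $\mathcal{R}\mathcal{C}\mathcal{E}(G,N)$ by producing, for each relative central extension $\sigma\colon M\to G$, a morphism out of $\delta$ and proving it is unique.

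For $(i)$, I would first observe that $[S,F]$ is normal in $F$ and $[R,F]\le[S,F]$, so the map $\delta\colon[S,F]/[R,F]\to G$ is just the restriction of the relative central extension $S/[R,F]\to G$ of Section~1 to the $G$‑invariant subgroup $[S,F]/[R,F]$; thus axioms $(i)$–$(iv)$ of Definition~0.1 pass to this subgroup, and the image of $\delta$ is $[S,F]R/R=[S/R,F/R]=[N,G]$, which equals $N$ since $(G,N)$ is perfect. Putting $A:=\ker\delta=(R\cap[S,F])/[R,F]$, the standard presentation $M(G,N)\cong(R\cap[S,F])/[R,F]$ of the Schur multiplier of a pair gives $A\cong M(G,N)$, and $([S,F]/[R,F])/A\cong N$ is clear. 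It remains to place $A$ inside $Z([S,F]/[R,F],G)\cap[[S,F]/[R,F],G]$: the first containment holds because $A\le R/[R,F]$ and $G$ acts trivially on $R/[R,F]$ (as $r^{f}r^{-1}=[r,f]\in[R,F]$), while the second rests on the identity
$$[S,F]=[\,[S,F]R,\,F\,]=[[S,F],F]\,[R,F],$$
in which the first equality is exactly $[S,F]R=S$ (perfectness again) and the second is the commutator identity for normal subgroups; this forces $[[S,F]/[R,F],G]=[S,F]/[R,F]$, which certainly contains $A$. Hence $\delta$ is a covering pair, and in particular $[S,F]/[R,F]$ is relatively perfect.

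For $(ii)$, let $\sigma\colon M\to G$ be an arbitrary object of $\mathcal{R}\mathcal{C}\mathcal{E}(G,N)$. By Theorem~\ref{t1} there is a homomorphism $\beta\colon S/[R,F]\to M$ with $\sigma\beta=\delta$; I would take its restriction $\beta'\colon[S,F]/[R,F]\to M$, which still satisfies $\sigma\beta'=\delta$, and check that $\beta'$ preserves the $G$‑action, so that it is a morphism in $\mathcal{R}\mathcal{C}\mathcal{E}(G,N)$ out of $\delta$. For uniqueness, if $\mu_{1},\mu_{2}\colon[S,F]/[R,F]\to M$ are two morphisms over $\sigma$, then $x\mapsto\mu_{1}(x)\mu_{2}(x)^{-1}$ takes values in the central, $G$‑fixed subgroup $\ker\sigma$ and is a $G$‑equivariant homomorphism, so it kills every $x^{g}x^{-1}$ and hence kills $[[S,F]/[R,F],G]=[S,F]/[R,F]$; therefore $\mu_{1}=\mu_{2}$. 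Existence and uniqueness together give the universal property, so $\delta\colon[S,F]/[R,F]\to G$ is the universal object of $\mathcal{R}\mathcal{C}\mathcal{E}(G,N)$ — an object whose existence, for perfect $(G,N)$, is guaranteed in any case by Loday's theorem. The step I expect to be the genuine obstacle is the action‑preservation of $\beta'$: unlike the classical case one cannot lift $\pi$ through $\sigma$ (since $\sigma(M)=N\ne G$ in general), so one must work with the substitute $\beta$ of Theorem~\ref{t1} and show that its failure to commute with the $G$‑action disappears upon restriction to the relatively perfect subgroup $[S,F]/[R,F]$; it is precisely perfectness of $(G,N)$, through $[[S,F]/[R,F],G]=[S,F]/[R,F]$, that ought to make this go through. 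Everything else is routine commutator calculus with the presentation.
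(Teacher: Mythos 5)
Your part $(i)$ is correct and follows essentially the paper's route: identify $\ker\delta=(R\cap[S,F])/[R,F]\cong M(G,N)$, get $N$ as the quotient via $[S,F]R=S$, note that $G$ acts trivially on $R/[R,F]$, and reduce everything to the relative perfectness $[[S,F]/[R,F],G]=[S,F]/[R,F]$; your derivation of the latter from the identity $[S,F]=[[S,F]R,F]=[[S,F],F][R,F]$ is in fact a cleaner version of the paper's element-by-element commutator expansion. The uniqueness half of $(ii)$ (the difference map $x\mapsto\mu_1(x)\mu_2(x)^{-1}$ lands in $\ker\sigma\subseteq Z(M,G)$, is a $G$-equivariant homomorphism, kills all $G$-commutators, hence kills the relatively perfect group) is also exactly the paper's argument and is fine.

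The genuine gap is the one you yourself flag and then leave to hope: you never prove that $\beta'=\beta|_{[S,F]/[R,F]}$ preserves the $G$-action, and without that you have no morphism in $\mathcal{R}\mathcal{C}\mathcal{E}(G,N)$ at all, so the existence half of the universal property is unproved. Note that this step does not follow formally from relative perfectness plus $\sigma\beta=\delta$: for fixed $g$ the defect $d_g(\bar s)=\beta(\bar s^{g})\bigl(\beta(\bar s)^{g}\bigr)^{-1}$ does lie in $\ker\sigma\subseteq Z(M,G)$ and so defines a homomorphism into an abelian, $G$-fixed group, but to conclude that $d_g$ vanishes on $[S,F]/[R,F]=[[S,F]/[R,F],G]$ you must show it kills the generators $\bar x^{-1}\bar x^{h}$, i.e. that $d_g(\bar x^{h})=d_g(\bar x)$ --- which is again an equivariance statement about $\beta$, so the argument is circular unless you feed in more information. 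The paper spends the bulk of its proof of $(ii)$ on exactly this point: it exploits the explicit construction of $\beta$ from Theorem 1.1 (the dichotomy $g\in N$ versus $g\notin N$ describing $\beta(\bar s^{g})$ in terms of $\beta(\bar s)^{g}$ resp.\ $\beta(\bar s)$ times an element of $\beta(R/[R,F])$), introduces the auxiliary map $f(\bar s)=\beta(\bar s)^{g}\beta(\bar s)^{-1}$ with values claimed central and $G$-fixed, and verifies case by case that $f$ kills all $G$-commutators, so that $f=1$ on $[S,F]/[R,F]$ by part $(i)$, giving $\varphi(\bar x^{g})=\varphi(\bar x)^{g}$. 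Some analogue of this computation (tied to the concrete lift used in Theorem 1.1, not just to the abstract properties of $\beta$) is indispensable, and its absence means your proposal establishes the covering-pair statement $(i)$ but not the universality statement $(ii)$.
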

%--------------------------------------------------------------------------------------------%
\begin{proof}
$(i)$ Consider the following action of $G$ on $[S,F]/[R,F]$
$$\frac{[S,F]}{[R,F]}\times G \rightarrow \frac{[S,F]}{[R,F]}$$
$$([s,f][R,F],g)\mapsto ([s,f][R,F])^{g}=[s^{t},f^{t}][R,F],$$ where $\pi:F\rightarrow G$ is the epimorphism with $ker\pi= R$ and $\pi(t)=g$. By the assumption $G=NQ$, let $Q\cong T/R$. Since $(G,N)$ is a perfect pair, $[N,G]=N$ and therefore $([S,F]R)/R=S/R$. It can be shown that $\delta:[S,F]/[R,F]\rightarrow G$ is a relative central extension of the pair $(G,N)$. On the other hand, since $$ker \delta= (R\cap [S,F])/[R,F]\cong M(G,N),$$ we have $$\frac{[S,F]/[R,F]}{ker \delta}\cong \frac{[S,F]}{R\cap [S,F]}\cong \frac{[S,F]R}{R}=\frac{S}{R}\cong N.$$ It is enough to show that $$ker \delta \leq [\frac{[S,F]}{[R,F]},G].$$ For this we show that $[[S,F]/[R,F],G]=[S,F]/[R,F]$. Clearly $$[[S,F]/[R,F],G]\leq [S,F]/[R,F].$$ So we only need to show the reverse containment. We know that $$[\frac{[S,F]}{[R,F]},G]=\langle[x,g]| x\in \frac{[S,F]}{[R,F]} , g\in G\rangle$$ such that
$$[x,g]=x^{-1}x^{g}=[s,f][R,F]^{-1}([s,f][R,F])^{g}=[s,f]^{-1}[s,f]^{t}[R,F],$$ for some $s\in S$ and $f\in F$, where $\pi(t)=g$. Since $S=[S,F]R$, for every $s\in S$ there exist $s'\in S$, $l\in F$ and $r\in R$ such that $$[s,f]=[[s',l]r,f]=[[s',l],f][[s',l],f,r][r,f].$$ Hence $[S,F]/[R,F]\leq [[S,F]/[R,F],G]$.\\
$(ii)$ Let $\sigma:M\rightarrow G$ be a relative central extension of the pair $(G,N)$, and $F$ be the free group on the set $G$ and $S\unlhd F$ such that $G\cong F/R$ and $N\cong S/R$. Then consider $\delta:[S,F]/[R,F]\rightarrow G$ as mentioned in the previous part. By Theorem\ref{t1}, there exists a homomorphism $$\varphi: [S,F]/[R,F]\rightarrow M$$ such that $\varphi=\beta|_{[S,F]/[R,F]}$. Now for every $g\in G$ and $\bar{x}\in [S,F]/[R,F]$,

\[\varphi ({{\bar x}^g}) = \beta ({{\bar x}^g}) = \left\{ {\begin{array}{*{20}{c}}
   {\beta {{\left( {\bar x} \right)}^g} = \varphi {{\left( {\bar x} \right)}^g}} & {ifg \in N}  \\
   {\beta (\bar x) = \varphi \left( {\bar x} \right)} & {ifg \notin N}  \\

 \end{array} } \right.\]
We have to prove $\varphi$ preserves the action. To do this it is enough to show that $\varphi(\bar{x})^g= \varphi(\bar{x})$ for every $g\in G$ and $\bar{x}\in [S,F]/[R,F]$. Define
\begin{eqnarray*}
% \nonumber to remove numbering (before each equation)
\ f:\frac{S}{[R,F]} & \rightarrow & \ M \\
\bar{s}& \mapsto & \beta(\bar{s})^{g} \beta(\bar{s})^{-1}.
\end{eqnarray*}
For every $g\in G$ and $\bar{x}\in [S,F]/[R,F]$, $f(\bar{s})\in Z(G,M)$ so
\[f({{\bar s}^g}) = \beta {({{\bar s}^g})^g}\beta {({{\bar s}^g})^{ - 1}} = \left\{ {\begin{array}{*{20}{c}}
   {{{(\beta {{(\bar s)}^g})}^g}{{(\beta {{(\bar s)}^{ - 1}})}^g} = f{{(\bar s)}^g}} & {ifg \in N}  \\
   {\beta {{(\bar s)}^g}\beta {{(\bar s)}^{ - 1}}} & {ifg \notin N}  \\

 \end{array} } \right.\]
Thus,
\[f([\bar s,g]) = f({\bar s^{ - 1}}{\bar s^g}) = \left\{ {\begin{array}{*{20}{c}}
   {f{{(\bar s)}^{ - 1}}f{{(\bar s)}^g} = [f(\bar s),g] = 1} & {ifg \in N}  \\
   {f{{(\bar s)}^{ - 1}}f(\bar s) = 1} & {ifg \notin N}  \\

 \end{array} } \right.\]
By part $(i)$ $$f(\frac{[S,F]}{[R,F]})=f([\frac{[S,F]}{[R,F]},G])=1.$$ Therefore $$\varphi(\bar{x}^{g})=\beta(\bar{x}^{g})=\beta(\bar{x})^{g}=\varphi(\bar{x})^{g}.$$
We claim that $\varphi$ is unique. By contrary, let $\varphi_{i}:[S,F]/[R,F]\rightarrow M$, $i=1,2$, be homomorphisms such that $$\sigma \varphi_{1}(\bar{s})=\delta(\bar{s})=\sigma \varphi_{2}(\bar{s}).$$ Define
 \begin{eqnarray*}
% \nonumber to remove numbering (before each equation)
\psi:\frac{[S,F]}{[R,F]} & \rightarrow & M \\
\bar{x}& \mapsto & \varphi_{1}(\bar{x}) {\varphi_{2}}^{-1}(\bar{x}).
\end{eqnarray*}
If $g\in G$ and $\bar{x}\in [S,F]/[R,F]$,$$\psi({\bar{x}}^{g})=\varphi_{1}({\bar{x}}^{g}){\varphi_{2}}^{-1}({\bar{x}}^{g})={\varphi_{1}(\bar{x})}^{g}({\varphi_{2}}^{-1}(\bar{x}))^{g}=
(\varphi_{1}(\bar{x}){\varphi_{2}}^{-1}(\bar{x}))^{g}=\psi(\bar{x})^g.$$ Consequently $\psi([\bar{x},g])=[\psi(\bar{x}),g]=1.$ Now by $(i)$ we have\\ $[[S,F]/[R,F],G]=[S,F]/[R,F]$ which implies that $\psi=1$. Hence the results holds.
\end{proof}
%----------------------------------------------------------------------------------------%

Note that if $(G,N)$ is a pair of groups with a free presentation $G\cong F/R$ and $N\cong S/R$, then $[S,F]/[R,F]$ is independent of the choice of the free presentation of $(G,N)$.
%---------------------------------------------------------------------------------------------------%

 The following theorem states a necessary and sufficient condition for a pair of groups to be perfect. It should be noted that Loday \cite{L} proved the following result using homological method but our proof is different and seems elementary.
%---------------------------------------------------------------------------------------------%
\begin{theorem}
A pair of groups $(G,N)$ is perfect pair if and only if the category $\mathcal{R}\mathcal{C}\mathcal{E}$ $(G,N)$ has a universal relative central extension.
\end{theorem}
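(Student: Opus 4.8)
For the forward implication there is nothing left to prove: if $[G,N]=N$, then, choosing any free presentation $G\cong F/R$ with $N\cong S/R$, part~(ii) of the preceding theorem says that $\delta\colon[S,F]/[R,F]\to G$ is a universal object of $\mathcal{R}\mathcal{C}\mathcal{E}$ $(G,N)$, so this category has a universal relative central extension. The work is all in the converse, and the plan is to adapt the classical argument that a group possessing a universal central extension is perfect.

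So assume $\sigma\colon U\to G$ is a universal object of $\mathcal{R}\mathcal{C}\mathcal{E}$ $(G,N)$. The key claim is $[U,G]=U$; once this is known, applying $\sigma$ gives
$$N=\sigma(U)=\sigma([U,G])=[\sigma(U),G]=[N,G],$$
so $(G,N)$ is perfect. I would prove the claim by contradiction. Suppose $[U,G]\neq U$ and set $W:=U/[U,G]$. Since $\mathrm{Inn}(U)\subseteq\mathrm{Im}\,\sigma$ (the remark following Definition~0.1), conjugation in $U$ is realized by the $G$-action through $N$, which forces $[U,G]$ to be a normal subgroup of $U$ that is $G$-invariant and contains $U'$. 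Hence $W$ is a nontrivial abelian group carrying the trivial $G$-action.

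Next I would build a witness in $\mathcal{R}\mathcal{C}\mathcal{E}$ $(G,N)$ on which uniqueness fails. Take $E:=U\times W$, the homomorphism $\tilde\sigma\colon E\to G$, $\tilde\sigma(u,w)=\sigma(u)$, and the $G$-action $(u,w)^g:=(u^g,w)$. The plan is to check the four axioms of a relative central extension: $(i)$ and $(ii)$ are immediate; $(iii)$ holds exactly because $W$ is abelian, so conjugation inside $E$ acts trivially on the $W$-coordinate, in agreement with the trivial $G$-action there; and $(iv)$ reduces to $\ker\tilde\sigma=\ker\sigma\times W\subseteq Z(U,G)\times W=Z(E,G)$. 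Then I would exhibit two distinct morphisms $U\to E$ in $\mathcal{R}\mathcal{C}\mathcal{E}$ $(G,N)$, namely $\varphi_1(u)=(u,1)$ and $\varphi_2(u)=(u,\bar u)$ with $\bar u$ the image of $u$ in $W$; both are group homomorphisms lying over $\sigma$, and both preserve the $G$-action (for $\varphi_2$ one uses $\overline{u^g}=\bar u$). As $W\neq 1$ they are different, contradicting the uniqueness clause in the universal property, and the claim follows.

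I expect the only genuinely delicate points to be the bookkeeping that $E$ really is an object of $\mathcal{R}\mathcal{C}\mathcal{E}$ $(G,N)$ and that $\varphi_1,\varphi_2$ really are morphisms there — in particular justifying $U'\subseteq[U,G]$, which is what makes $W$ abelian and thereby rescues axiom~$(iii)$. Everything else is routine verification.
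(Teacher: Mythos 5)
Your proof is correct and is essentially the paper's own argument: the paper likewise gets necessity from the preceding theorem and, for sufficiency, pits two morphisms $\theta_1(m)=(m,1)$ and $\theta_2(m)=(m,\overline{\rho(m)})$ from the universal object into a product extension with a trivial-action abelian factor against the uniqueness clause. The only difference is that the paper adjoins $N/[N,G]$ (forming $M\times N/[N,G]$) and reads off $N=[N,G]$ at once, whereas you adjoin $U/[U,G]$, first prove $[U,G]=U$, and then push forward with $\sigma$ --- the same standard trick with slightly more bookkeeping.
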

%--------------------------------------------------------------------------------------------%
\begin{proof}Necessity is Theorem 2.4. For sufficiency, let $$1\rightarrow A\rightarrow M\stackrel{\rho}{\rightarrow} G\rightarrow 1$$ be a universal relative central extension of the pair $(G,N)$. Consider the following relative central extension $$1\rightarrow A\times\frac{N}{[N,G]}\rightarrow M\times\frac{N}{[N,G]}\stackrel{\psi}{\rightarrow}N\rightarrow 1, $$ where $\psi(m,\bar{n})=\rho(m)$, $m\in M$, $\bar{n}\in N/[N,G]$. Define homomorphisms $$\theta_{i}:M\rightarrow M\times\frac{N}{[N,G]},$$ $i=1,2$, by $\theta_{1}(m)=(m,1)$ and $\theta_{2}(m)=(m,\overline{\rho(m)})$. Then we have $\psi\theta_{i}=\rho$, $i=1,2$, where $\theta_{1}=\theta_{2}$. Consequently, $N=[N,G]$ and hence $(G,N)$ is a perfect pair.
\end{proof}
%--------------------------------------------------------------------------------------------%

 Finally, we intend to consider a covering pair as a pair of groups. If $\sigma:M\rightarrow G$ is a covering pair of $(G,N)$, then there exists a subgroup $A$ of $M$ such that $A\subseteq Z(M,G)\cap [M,G]$, $ A\cong M(G,N)$ and $M/A\cong N$. Now, we can consider $(M,A)$ as a covering pair of $(G,N)$. It is known that any two covering groups of a finite group $G$ are isoclinic. Moreover, if $G$ is a finite perfect pair of groups, then a covering group of $G$ is also perfect. We intend to investigate these facts for a covering pair as our new point of view of a covering pair of groups. In this way, the notion of isoclinism for the pair of groups is needed which we review it of \cite{Sal}.
%----------------------------------------------------------------------------------------%
\begin{definition}
Let $(G,N)$ and $(H,K)$ be two pairs of groups. The pairs $(G,N)$ and $(H,K)$ are said to be isoclinic if there exists isomorphisms $\epsilon:G/Z(G,N)\rightarrow H/Z(H,K)$ and $\eta:[G,N]\rightarrow [H,K]$ such that $\epsilon(N/Z(G,N))=K/Z(H,K)$ and $\eta([g,n])=[h,k]$ whenever $\epsilon(gZ(G,N))=hZ(H,K)$ and $\eta(nZ(G,N))=kZ(H,K)$. This concept is denoted as $(G,N)\sim (H,K)$.
\end{definition}
%----------------------------------------------------------------------------------------%

It is a well-known fact that all covering groups of a given finite group are mutually isoclinic (see \cite{Jo}). The following lemma helps us to prove this fact for some special types of covering pairs for arbitrary pair of groups.
%----------------------------------------------------------------------------------------%
\begin{lemma}
Let (G,N) and (H,K) be two pairs of groups. If $\theta:G\rightarrow H$ is an epimorphism such that $\theta(N)=K$ and $ker\theta\cap N=1$, then $(G,N)\sim (H,K).$
\end{lemma}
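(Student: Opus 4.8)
The plan is to construct the two isomorphisms required by Definition 2.6 directly from the epimorphism $\theta$. First I would record the key structural consequences of the hypotheses $\theta(N)=K$ and $\ker\theta\cap N=1$. Since $\ker\theta\cap N=1$, the restriction $\theta|_N:N\to K$ is an isomorphism; more importantly, for any $g\in G$ and $n\in N$ we have $[g,n]\in N$ (as $N\unlhd G$), so $\theta$ restricted to the subgroup $[G,N]$ lands in $[H,K]$, is surjective onto $[H,K]$ since $\theta$ is, and is injective because $[G,N]\le N$ and $\ker\theta\cap N=1$. This immediately gives the isomorphism $\eta:[G,N]\to[H,K]$, namely $\eta=\theta|_{[G,N]}$.

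Next I would build $\epsilon:G/Z(G,N)\to H/Z(H,K)$. The natural candidate is the map induced by $\theta$, so I must show $\theta(Z(G,N))\subseteq Z(H,K)$ and, for the inverse direction, that $\theta^{-1}(Z(H,K))\subseteq Z(G,N)\cdot\ker\theta$, which will make the induced map well defined and injective on the quotient; surjectivity is automatic from surjectivity of $\theta$. Recall $Z(G,N)=\{g\in G: [g,n]=1\ \forall n\in N\}$ in the relative sense. If $g\in Z(G,N)$ then for every $k\in K$, writing $k=\theta(n)$, we get $[\theta(g),k]=\theta([g,n])=1$, so $\theta(g)\in Z(H,K)$. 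Conversely, suppose $\theta(g)\in Z(H,K)$; then for every $n\in N$, $\theta([g,n])=[\theta(g),\theta(n)]=1$, and since $[g,n]\in N$ and $\ker\theta\cap N=1$, we conclude $[g,n]=1$, i.e. $g\in Z(G,N)$ outright (no $\ker\theta$ coset needed). This is actually cleaner than I anticipated: $\theta^{-1}(Z(H,K))=Z(G,N)$ exactly, and $\ker\theta\subseteq Z(G,N)$ follows as the special case $g\in\ker\theta$. Hence $\epsilon(gZ(G,N))=\theta(g)Z(H,K)$ is a well-defined isomorphism.

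Then I would verify the two compatibility conditions. For $\epsilon(N/Z(G,N))=K/Z(H,K)$: since $\theta(N)=K$, applying the definition of $\epsilon$ gives $\epsilon(N/Z(G,N))=\theta(N)Z(H,K)/Z(H,K)=K/Z(H,K)$. For the commutator compatibility: suppose $\epsilon(gZ(G,N))=hZ(H,K)$ and $\eta(nZ(G,N))=kZ(H,K)$ — here I should note that $\eta$ also induces a map on the relevant quotient, or rather interpret the condition via representatives, since $\eta$ is defined on $[G,N]$ and $N\subseteq\,$? Here I would be slightly careful: the definition pairs $\epsilon$ on $G/Z(G,N)$ with $\eta$ on $[G,N]$, and the condition ``$\eta(nZ(G,N))=kZ(H,K)$'' implicitly uses that $n$ ranges over representatives with $nZ(G,N)\in N/Z(G,N)$. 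In any case, choosing representatives, $hZ(H,K)=\theta(g)Z(H,K)$ and $kZ(H,K)=\theta(n)Z(H,K)$ means $h=\theta(g)z_1$ and $k=\theta(n)z_2$ with $z_i\in Z(H,K)$; then $[h,k]=[\theta(g)z_1,\theta(n)z_2]=[\theta(g),\theta(n)]=\theta([g,n])=\eta([g,n])$, using that central elements (in the relative sense) drop out of the commutator. This establishes $\eta([g,n])=[h,k]$ and completes the proof.

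The main obstacle, such as it is, is bookkeeping around the relative center: one must consistently use that $Z(G,N)$ consists of elements of $G$ (not $N$) commuting with all of $N$, and repeatedly invoke $\ker\theta\cap N=1$ to pull injectivity facts back from $H$ to $G$ — in particular to see that $\theta$ detects triviality of relative commutators $[g,n]$. Once that observation is isolated, every step is a short computation; there is no deep point, and no need for the finiteness of any group or for the Schur multiplier machinery at this stage.
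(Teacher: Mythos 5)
Your strategy is the same as the paper's: the published proof simply defines $\epsilon(gZ(G,N))=\theta(g)Z(H,K)$ and $\varphi([n,g])=[\theta(n),\theta(g)]$ and asserts that these are isomorphisms, so your checks that $\eta=\theta|_{[G,N]}$ is injective (because $[G,N]\leq N$ and $\ker\theta\cap N=1$) and surjective, and that $\theta^{-1}(Z(H,K))=Z(G,N)$ (hence $\epsilon$ is well defined, injective and onto, and $\ker\theta\leq Z(G,N)$) are exactly the details the paper omits; those parts are correct.

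There is, however, one step that does not hold as you justify it. In the compatibility check you write $[h,k]=[\theta(g)z_1,\theta(n)z_2]=[\theta(g),\theta(n)]$ ``using that central elements (in the relative sense) drop out of the commutator''. With the definition you yourself state, $Z(H,K)=\{h\in H:\ [h,k]=1\ \forall k\in K\}$, only the first slot is harmless: indeed $[\theta(g)z_1,\theta(n)]=[\theta(g),\theta(n)]^{z_1}[z_1,\theta(n)]=[\theta(g),\theta(n)]$, since $z_1$ centralizes $K$ and $[\theta(g),\theta(n)]\in K$. But expanding the second slot gives $[\theta(g)z_1,\theta(n)z_2]=[\theta(g)z_1,z_2]\,[\theta(g),\theta(n)]$, and $z_2\in Z(H,K)$ need not commute with $\theta(g)z_1$, so the extra factor need not vanish. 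Concretely, for $H=S_3$, $K=A_3$ one has $Z(H,K)=A_3$, and $[(12),(123)]\neq[(12),(132)]$ even though $(123)$ and $(132)$ lie in the same coset of $Z(H,K)$; so ``relative-central elements drop out'' is only valid when $Z(H,K)\leq Z(H)$. This is really a sensitivity of the compatibility clause in Definition 3.6 to the choice of representatives (a point the paper's two-line proof also ignores) rather than a flaw in your overall plan, and it is easily repaired: since your $\epsilon$ is induced by $\theta$, it suffices to verify the clause on the canonical representatives $h=\theta(g)$, $k=\theta(n)$, where $\eta([g,n])=\theta([g,n])=[\theta(g),\theta(n)]=[h,k]$ is immediate --- which is precisely what the paper's formula $\varphi([n,g])=[\theta(n),\theta(g)]$ records. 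As written, though, the displayed equality $[\theta(g)z_1,\theta(n)z_2]=[\theta(g),\theta(n)]$ is not justified for your choice of $Z(H,K)$.
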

%---------------------------------------------------------------------------------------%
\begin{proof} Define $\epsilon:G/Z(G,N)\rightarrow H/Z(H,K)$ such that $\epsilon(gZ(G,N))=\theta(h)Z(H,K)$ and $\varphi:[N,G]\rightarrow [K,H]$ by $\varphi([n,g])=[\theta(n),\theta(g)]$. We can easily see that $\epsilon$ and $\varphi$ are isomorphisms which yields that $(G,N)\sim (H,K)$.
\end{proof}
%-------------------------------------------------------------------------------------------------------------------
\begin{theorem}
Let $(G,N)$ be a pair of groups. Then every two covering pairs $(P_{i},A_{i})$, $i=1,2$, where $P_{i}=P'_{i}$ and $\phi(P_{i})\neq P_{i}$ of $(G,N)$ are isoclinic.
\end{theorem}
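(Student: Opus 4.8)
The plan is to route both covering pairs through the distinguished covering pair $\delta\colon C:=[S,F]/[R,F]\to G$ supplied by Theorem~2.4, and then to invoke the lemma immediately preceding the statement. First I would observe that the hypothesis $P_i=P_i'$ already forces $(G,N)$ to be a perfect pair: as recorded in the Introduction, every covering pair $\sigma_i\colon P_i\to G$ satisfies $P_i'=[P_i,G]$, so $P_i=P_i'$ gives $N=\sigma_i(P_i)=\sigma_i([P_i,G])=[N,G]$. Fixing a free presentation $G\cong F/R$ with $N\cong S/R$ and $S\unlhd F$, Theorem~2.4 then tells us that $\delta\colon C=[S,F]/[R,F]\to G$ is a covering pair of $(G,N)$ and the universal object of $\mathcal{R}\mathcal{C}\mathcal{E}$ $(G,N)$, with $A_C:=\ker\delta=(R\cap[S,F])/[R,F]\cong M(G,N)$ (which is central in $C$).

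Next I would show that for each $i$ there is an isomorphism $\varphi_i\colon C\to P_i$ carrying $A_C$ onto $A_i$, so that in fact $(C,A_C)\cong(P_i,A_i)$ as pairs of groups. By Theorem~\ref{t1}, a lift of the canonical map $S\to N$ restricts to a homomorphism $\varphi_i=\beta_i|_{C}\colon C\to P_i$ with $\sigma_i\varphi_i=\delta$ which, as in the proof of Theorem~2.4 part $(ii)$, preserves the $G$-action. Surjectivity of $\varphi_i$ is a Frattini argument (cf.\ the proof of Theorem~2.3 part $(i)$): $\varphi_i(C)$ is $G$-invariant, hence normal in $P_i$ by property $(iii)$ of relative central extensions, and $\sigma_i(\varphi_i(C))=\delta(C)=N=\sigma_i(P_i)$, so $P_i=\varphi_i(C)\,A_i$; since $A_i\subseteq Z(P_i,G)\cap[P_i,G]\subseteq Z(P_i)\cap P_i'\subseteq\phi(P_i)$ (elements of $Z(P_i)\cap P_i'$ being non-generators, as in the proof of Theorem~\ref{t1}) and $\phi(P_i)\neq P_i$, we get $\varphi_i(C)=P_i$. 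From $\sigma_i\varphi_i=\delta$ one reads off $\ker\varphi_i\subseteq\ker\delta=A_C$ and $\varphi_i^{-1}(A_i)=\ker(\sigma_i\varphi_i)=A_C$, so surjectivity yields $\varphi_i(A_C)=A_i$; thus $\varphi_i$ restricts to an epimorphism $A_C\to A_i$ between copies of $M(G,N)$, which (when $M(G,N)$ is finite) must be an isomorphism, whence $\ker\varphi_i=\ker(\varphi_i|_{A_C})=1$ and $\varphi_i\colon C\to P_i$ is an isomorphism with $\varphi_i(A_C)=A_i$.

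Finally I would apply the lemma preceding the theorem with $\theta=\varphi_i$: it is an epimorphism with $\varphi_i(A_C)=A_i$ and $\ker\varphi_i\cap A_C=1$, hence it gives $(C,A_C)\sim(P_i,A_i)$ for $i=1,2$; since isoclinism of pairs is symmetric and transitive, $(P_1,A_1)\sim(C,A_C)\sim(P_2,A_2)$, which is the assertion.

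The heart of the matter is the bijectivity of $\varphi_i$. Its surjectivity is just the Frattini/non-generator argument already used for Theorem~\ref{t1} — this is precisely where $\phi(P_i)\neq P_i$ enters, together with $P_i=P_i'$, which places $A_i$ inside $\phi(P_i)$ — so the real obstacle is injectivity, i.e.\ showing that a surjective endomorphism of the Schur multiplier $M(G,N)$ is an automorphism. This holds whenever $M(G,N)$ is finite, in particular when the pair $(G,N)$ is finite, and more generally it is enough for $M(G,N)$ to be Hopfian. One should note there is no shortcut around this via the lemma: since $\ker\varphi_i\subseteq A_C$ automatically, the lemma's hypothesis $\ker\varphi_i\cap A_C=1$ is equivalent to injectivity of $\varphi_i$.
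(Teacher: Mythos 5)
Your proposal is correct (up to the caveat you yourself flag) and is assembled from the same ingredients as the paper's proof, but it is routed differently, and the comparison is instructive. The paper never passes to $C=[S,F]/[R,F]$ and proves no isomorphism: it takes the epimorphism $\beta\colon S/[R,F]\to P_i$ coming from the proof of Theorem 2.2 (i.e.\ Theorem 1.1 together with the Frattini argument from $P_i=P_i'$ and $\phi(P_i)\neq P_i$), takes as distinguished subgroup $(R\cap[S,F])/[R,F]$, and applies Lemma 2.7 directly: Theorem 2.2 shows that $\beta$ carries this subgroup onto $A_i$ and is injective on it, so $\ker\beta\cap (R\cap[S,F])/[R,F]=1$, each $(P_i,A_i)$ is isoclinic to the fixed pair $\bigl(S/[R,F],\,(R\cap[S,F])/[R,F]\bigr)$, and transitivity finishes. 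So the ``shortcut via the lemma'' that you say does not exist does exist --- but on the larger group, where the lemma's kernel condition is strictly weaker than injectivity of the whole map; only on your restricted domain is it equivalent to injectivity. That said, the paper's shortcut does not evade the crux you isolate: its kernel condition is precisely injectivity of the induced surjection $(R\cap[S,F])/[R,F]\twoheadrightarrow A_i$ between copies of $M(G,N)$, which the proof of Theorem 2.2 secures only by invoking finiteness of $A$ --- a hypothesis stated in neither theorem --- so your finiteness/Hopficity caveat is not a defect relative to the paper but an explicit statement of an assumption the paper uses silently. When it applies, your route even yields slightly more, namely an isomorphism of pairs $(P_i,A_i)\cong(C,A_C)$, essentially recovering Theorem 2.3(i); on the other hand, your action-preservation step (via Theorem 2.4(ii)) is superfluous, since Lemma 2.7 and isoclinism of pairs involve only the group-theoretic data, and the paper's route also spares you the surjectivity argument from the smaller subgroup (which in your version additionally used perfectness through $\delta(C)=[N,G]=N$).
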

%--------------------------------------------------------------------------------------------%
\begin{proof} By the proof of Theorem 2.2 if $(P,A)$ is a covering pair of $(G,N)$ then there exists an epimorphism $\beta:S/[R,F]\rightarrow P$ such that $\psi(([S,F]\cap R)/[R,F])=A$. Clearly $$ker \psi \cap [\frac{S}{[R,F]},\frac{[S,F]\cap R}{[R,F]}]=1.$$ Using Lemma 2.7 we obtain $$(P,A)\sim (\frac{S}{[R,F]},\frac{[S,F]\cap R}{[R,F]}).$$
\end{proof}
%---------------------------------------------------------------------------------------------%
 The following results can be easily obtained if we use the notion of isoclinism for a perfect pair of groups. Note that these results are generalizations of similar results for perfect groups to perfect pairs ( see {\rm }\cite[Lemma 2.4, Corollary 2.5 and Proposition 2.6]{Jo}).
%---------------------------------------------------------------------------------------------%
\begin{lemma}
Let $(G,N)$ be a finite perfect pair of groups, and $(H,K)$ be any finite pair of groups which is isoclinic to $(G,N)$. Then $K$ is isomorphic to $$\frac{N\times Z(H,K)}{Z(G,N)\times(Z(H,K)\cap [H,K])}.$$
\end{lemma}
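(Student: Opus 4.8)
The plan is to adapt to pairs the classical argument for perfect groups behind \cite[Lemma~2.4]{Jo}: exhibit $K$ as a quotient of the external direct product $N\times Z(H,K)$ and then pin down the kernel.

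First I would extract what perfectness and the isoclinism give. Since $(G,N)$ is perfect, $[G,N]=N$, so the isomorphism $\eta\colon[G,N]\to[H,K]$ of Definition~2.6 is in fact an isomorphism $\eta\colon N\to[H,K]$; in particular $[H,K]\cong N$ is finite, and $\eta$ restricts to an isomorphism of $Z(G,N)\cap[G,N]=Z(G,N)$ onto $Z(H,K)\cap[H,K]$. This last point is the heart of the matter and is obtained by unwinding the compatibility clause of Definition~2.6 (namely $\eta([g,n])=[h,k]$ whenever $\epsilon(gZ(G,N))=hZ(H,K)$ and $\epsilon$ sends $nZ(G,N)$ to $kZ(H,K)$): an element $n\in N$ lies in $Z(G,N)$ precisely when its image in $G/Z(G,N)$ is trivial, which under $\epsilon$ is equivalent to $\eta(n)$ being $H$-central, i.e.\ to $\eta(n)\in Z(H,K)$. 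Secondly, applying $\epsilon$ to the identity $[\,G/Z(G,N),\,NZ(G,N)/Z(G,N)\,]=NZ(G,N)/Z(G,N)$ (valid because $[G,N]=N$) and using $\epsilon(NZ(G,N)/Z(G,N))=KZ(H,K)/Z(H,K)$ gives $[H,K]Z(H,K)=KZ(H,K)$; since $Z(H,K)\le K$ this collapses to $K=[H,K]Z(H,K)$, so $(H,K)$ inherits from $(G,N)$ the property of being perfect modulo its centre.

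With these in hand I would define $\theta\colon N\times Z(H,K)\to K$ by $\theta(n,z)=\eta(n)z$. This is a homomorphism because each $z\in Z(H,K)$ is $H$-central and hence commutes with $\eta(N)=[H,K]$; it is surjective by $K=[H,K]Z(H,K)$; and $\ker\theta=\{(n,z):\eta(n)=z^{-1}\}$, which forces $\eta(n)\in[H,K]\cap Z(H,K)$ and $z=\eta(n)^{-1}$, so $\ker\theta=\{(n,\eta(n)^{-1}):n\in Z(G,N)\}$ by the identification of the previous paragraph. Under the identifications $Z(G,N)\le N$ and $Z(H,K)\cap[H,K]\le Z(H,K)$ (the two being interchanged by $\eta$), this kernel is precisely the subgroup written $Z(G,N)\times(Z(H,K)\cap[H,K])$ in the statement, so the first isomorphism theorem yields $K\cong (N\times Z(H,K))/\bigl(Z(G,N)\times(Z(H,K)\cap[H,K])\bigr)$.

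The step I expect to be the main obstacle is establishing $\eta(Z(G,N))=Z(H,K)\cap[H,K]$, i.e.\ that the commutator-part isomorphism of an isoclinism of pairs carries pair-centre into pair-centre; once that is secured, together with the transported identity $K=[H,K]Z(H,K)$, the remainder is a direct computation with $\theta$.
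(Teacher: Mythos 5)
The paper gives no proof of this lemma at all: it is stated after the remark that these results ``can be easily obtained'', with a pointer to the group-case originals \cite[Lemma 2.4]{Jo}. Your proposal is the natural adaptation of that argument, and most of it is sound. The compatibility you lean on, namely $\epsilon(cZ(G,N))=\eta(c)Z(H,K)$ for all $c\in[G,N]$, does hold (write $c$ as a product of commutators $[g_i,n_i]$ and use $\epsilon([g_iZ(G,N),n_iZ(G,N)])=[\epsilon(g_iZ(G,N)),\epsilon(n_iZ(G,N))]$ together with the defining clause for $\eta$); it deserves this short generator computation rather than the phrase ``unwinding the compatibility clause'', but granting it, your conclusions $\eta(Z(G,N))=Z(H,K)\cap[H,K]$ and $K=[H,K]Z(H,K)$ are correct, and $\theta(n,z)=\eta(n)z$ is indeed a well-defined epimorphism onto $K$ (here one uses $Z(G,N)=Z(G)\cap N\leq N=[G,N]$ and $Z(H,K)\leq Z(H)\cap K$, as in \cite{Sal}).

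The genuine problem is the last identification. You correctly compute $\ker\theta=\{(n,\eta(n)^{-1}):n\in Z(G,N)\}$, but this is a \emph{diagonal} copy of $Z(G,N)$ inside $N\times Z(H,K)$, of order $|Z(G,N)|$, whereas the subgroup $Z(G,N)\times(Z(H,K)\cap[H,K])$ named in the statement has order $|Z(G,N)|\cdot|Z(H,K)\cap[H,K]|$; the two agree only when $Z(G,N)=1$. So the sentence asserting that the kernel ``is precisely'' that subgroup is false, and it cannot be repaired to match the literal reading of the displayed quotient, because that literal reading is itself wrong: taking $(H,K)=(G,N)$ with $G=N=SL_2(5)$, the displayed quotient is $(SL_2(5)\times C_2)/(C_2\times C_2)\cong PSL_2(5)$, of order $60$, while $K=SL_2(5)$ has order $120$. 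What your argument actually proves --- and what the lemma, like its model in \cite{Jo}, must be understood to assert --- is the central-product form $K\cong (N\times Z(H,K))/\{(n,\eta(n)^{-1}):n\in Z(G,N)\}$, i.e.\ $K$ is the central product of $N$ and $Z(H,K)$ amalgamating $Z(G,N)\cong Z(H,K)\cap[H,K]$ via $\eta$. State the conclusion in that form (or say explicitly that the denominator denotes this identified anti-diagonal subgroup); as written, your final step conflates a diagonal with a direct product.
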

%---------------------------------------------------------------------------------------------%
\begin{corollary}
Let $(G,N)$ be a finite perfect pair of groups with trivial center. Then for any pair of groups isoclinic to $(G,N)$, say $(H,K)$, we have $K$ is isomorphic to direct product of $N$ and an abelian group.
\end{corollary}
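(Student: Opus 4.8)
The plan is to read this statement straight off the preceding lemma. By hypothesis the pair $(G,N)$ has trivial center, i.e. $Z(G,N)=1$; substituting this into the isomorphism supplied by the previous lemma gives
$$K\cong\frac{N\times Z(H,K)}{1\times\bigl(Z(H,K)\cap[H,K]\bigr)},$$
so it only remains to identify the right-hand side as a direct product of $N$ with an abelian group.

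First I would record the elementary group-theoretic fact that whenever a group is written as a direct product $A\times B$ and $D\trianglelefteq B$, one has $(A\times B)/(1\times D)\cong A\times(B/D)$, the isomorphism being induced by $(a,b)\mapsto(a,bD)$. Applying this with $A=N$, $B=Z(H,K)$ and $D=Z(H,K)\cap[H,K]$ --- which is normal in $B$ because $Z(H,K)$ is abelian, being a subgroup of $Z(H)$ --- yields
$$K\cong N\times\frac{Z(H,K)}{Z(H,K)\cap[H,K]}.$$
Then I would simply note that the second factor $A:=Z(H,K)/(Z(H,K)\cap[H,K])$, as a quotient of the abelian group $Z(H,K)$, is itself abelian. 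This exhibits $K\cong N\times A$ with $A$ abelian, which is the assertion.

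I do not expect a real obstacle here: the corollary is essentially the special case $Z(G,N)=1$ of the preceding lemma, and the only additional ingredients are the direct-product quotient identity and the commutativity of the center of a pair, both routine. The one place where a word of care is warranted is verifying that the hypothesis ``$(G,N)$ has trivial center'' is exactly the condition $Z(G,N)=1$ that trivializes the first factor of the denominator in the lemma's formula; this is immediate from the definition of the center of a pair of groups.
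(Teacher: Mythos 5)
Your argument is correct and matches the paper's (implicit) treatment: the corollary is stated as an immediate consequence of the preceding lemma, and specializing that lemma to $Z(G,N)=1$, applying $(A\times B)/(1\times D)\cong A\times (B/D)$, and noting that $Z(H,K)/(Z(H,K)\cap[H,K])$ is abelian (since $Z(H,K)\leq Z(H)$ under the notion of center of a pair used here) is exactly the intended routine verification.
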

%---------------------------------------------------------------------------------------------%
\begin{proposition}
Let $(G,N)$ be a finite perfect pair of groups, and $(H,K)$ be any pair which is isoclinic to $(G,N)$ such that $|N|=|K|$. Then $N\cong K$.
\end{proposition}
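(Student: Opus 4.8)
The plan is to read the conclusion off the isoclinism data directly, in the spirit of the corresponding fact for perfect groups recorded in \cite{Jo}. By Definition 2.6 the isoclinism $(G,N)\sim(H,K)$ provides, in particular, an isomorphism $\eta\colon[G,N]\to[H,K]$. Since $(G,N)$ is a perfect pair we have $[G,N]=N$, so $\eta$ is an isomorphism $N\cong[H,K]$; in particular $|[H,K]|=|N|$. Now I would bring in the hypothesis $|N|=|K|$: as $K$ is normal in $H$, the subgroup $[H,K]$ is contained in $K$, and $|[H,K]|=|N|=|K|<\infty$ forces $[H,K]=K$. Consequently $(H,K)$ is itself perfect and $\eta$ yields $K=[H,K]\cong[G,N]=N$, which is the assertion. (In passing this shows that a finite pair isoclinic to a finite perfect pair and of the same order is again perfect, paralleling the classical statement for perfect groups.)

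One could instead try to deduce the statement from Lemma 2.9, using the isomorphism $K\cong\bigl(N\times Z(H,K)\bigr)/\bigl(Z(G,N)\times(Z(H,K)\cap[H,K])\bigr)$: comparing orders and using that the restriction of $\epsilon$ gives $N/Z(G,N)\cong K/Z(H,K)$, one obtains $|Z(G,N)|=|Z(H,K)|$, after which the quotient should collapse to a copy of $N$. I would nevertheless keep the $\eta$-argument as the main proof, because the route through Lemma 2.9 forces one to be precise about exactly how the identifying subgroup sits inside $N\times Z(H,K)$ — it is really (the graph of) $\eta$ restricted to $Z(G,N)$ rather than a literal internal direct product, a point that matters since $Z(G,N)$ need not be trivial even when $|N|=|K|$ — and the attendant bookkeeping is heavier than the two lines above.

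There is no serious obstacle in the first approach. The one step that must be phrased with a little care, and the only place the finiteness hypothesis is genuinely used, is the implication ``$[H,K]\le K$ and $|[H,K]|=|K|<\infty$, hence $[H,K]=K$''; in particular I would make sure that no finiteness of $H$ beyond what is actually assumed (finiteness of $N$, equivalently of $K$) is ever invoked. Everything else is immediate from Definition 2.6 together with $(G,N)$ being a perfect pair.
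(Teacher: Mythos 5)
Your main argument is correct, but it is not the route the paper takes. The paper's own proof is a one-liner: it invokes the preceding lemma (the pair-analogue of the Jones--Wiegold lemma, giving $K\cong \bigl(N\times Z(H,K)\bigr)/\bigl(Z(G,N)\times(Z(H,K)\cap[H,K])\bigr)$; the proof cites it as ``Lemma 3.9'', evidently a slip for the lemma just above) and combines it with $|K|=|N|$ --- essentially the second route you sketched and set aside. Your preferred argument instead reads the conclusion directly off the $\eta$-component of the isoclinism: $\eta$ gives $N=[G,N]\cong[H,K]$, and since $[H,K]\le K$ by normality of $K$ in $H$ and $|[H,K]|=|N|=|K|<\infty$, you get $[H,K]=K$, hence $K\cong N$. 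This is sound and arguably preferable: it bypasses the structural lemma entirely (which the paper states without proof, and whose internal identification of the amalgamated subgroup is exactly the bookkeeping you flag), it uses finiteness only of $N$ (equivalently of $K$) rather than of the whole pairs, and it yields the extra observation that $(H,K)$ is itself a perfect pair. What the paper's route buys in exchange is economy of means within the paper: the same lemma also underlies the corollary on pairs with trivial center, so the proposition there falls out as an immediate corollary of an already-recorded structural fact rather than requiring a separate argument.
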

\begin{proof}The result follows from Lemma 3.9 and the assumption that $|K|=|N|.$
\end{proof}
%---------------------------------------------------------------------------------------------%

%%%%%%%%%%%%%%%%%%%%%%%%%%%%%%%%%%%%%%%%%%%
% References
%%%%%%%%%%%%%%%%%%%%%%%%%%%%%%%%%%%%%%%%%%%
\bibliographystyle{amsplain}
%%%%%%%%%%%%%%%%%%%%%%%%%%%%%%%%%%%%%%%%%%%
% Please cite your relevant papers but at most total 5 papers/books.
%%%%%%%%%%%%%%%%%%%%%%%%%%%%%%%%%%%%%%%%%%%

\bigskip
\bigskip

%{\bf Received: Month xx, 200x}

{\footnotesize {\bf Azam Pourmirzaei}\; \\ {Department of
Mathematics}, {Hakim Sabzevari University, P. O. Box 96179-76487,}{ Sabzevar, Iran}\\
{\tt a.pmirzaei@gmail.com}\\

{\footnotesize {\bf Mitra Hassanzadeh}\; \\ {Department of
Mathematics}, {Department of
Mathematics}, {Ferdowsi University of Mashhad,
P.O.Box 1159-91775,} {Mashhad, Iran.}\\
{\tt mtr.hassanzadeh@gmail.com}\\

{\footnotesize {\bf Behrooz Mashayekhy}\; \\ {Department of
Mathematics}, {  Center of Excellence in Analysis on Algebraic Structures, Ferdowsi University of Mashhad,
P.O.Box 1159-91775,} {Mashhad, Iran.}\\
{\tt bmashf@um.ac.ir}\\

\end{document}